\newtheorem{thm}{Theorem}[section]
\newtheorem{cor}[thm]{Corollary}
\newtheorem{lem}[thm]{Lemma}
\newtheorem{prop}[thm]{Proposition}
\newtheorem{rem}[thm]{Remark}
\renewcommand{\a}{\alpha}
\renewcommand{\b}{\beta}
\newcommand{\g}{\gamma}
\def\eps{\varepsilon}
\newcommand{\supp}{\operatorname{supp}}
\def\C{\mathbb{C}}
\def\R{\mathbb{R}}
\def\N{\mathbb{N}}
\title{Symmetric moment problems and a conjecture of Valent}
\author{C. Berg and R. Szwarc\footnote{The author acknowledges support by Polish grant NCN 2013/11/B/ST1/02308}}
\begin{document}
\maketitle

\begin{abstract} In 1998 G. Valent made conjectures about the order and type of certain indeterminate Stieltjes moment problems associated with birth and death processes  having polynomial birth and death rates of degree $p\ge 3$. Romanov recently proved  that the order is $1/p$ as conjectured, see \cite{Ro}. We prove that the type with respect to the order is  related to  certain multi-zeta values and that this type belongs to the interval 
$$[\pi/(p\sin(\pi/p)),\pi/(p\sin(\pi/p)\cos(\pi/p))],
$$ 
which also contains the conjectured value. This proves that the conjecture about type is asymptotically correct as $p\to\infty$.

The main idea is to obtain estimates for order and type of symmetric indeterminate Hamburger moment problems when the orthonormal polynomials $P_n$ and those of the second kind $Q_n$ satisfy $P_{2n}^2(0)\sim c_1n^{-1/\b}$ and $Q_{2n-1}^2(0)\sim c_2 n^{-1/\a}$, where
$0<\a,\b<1$ can be different, and $c_1,c_2$ are positive constants. In this case the order of the moment problem is majorized by the harmonic mean of $\a,\b$.  
Here $\alpha_n\sim \beta_n$ means that $\alpha_n/\beta_n\to 1$. This also leads to a new proof of Romanov's Theorem that the order is $1/p$.
\end{abstract}

\noindent 
2010 {\em Mathematics Subject Classification}:\\
Primary 44A60; Secondary 11M32, 30D15, 60J80  

\noindent
Keywords: indeterminate moment problems, birth and death processes with polynomials rates, multi-zeta values.

\section{Introduction and results} To every indeterminate moment problem is associated an order $\rho\in[0,1]$, namely the common order of the four entire functions in the Nevanlinna matrix. If  $0<\rho<1$ there is also associated a type $\tau\in[0,\infty]$ which is the common type of the same functions, see \cite{B:P}. When $\rho=1$ the type is necessarily $\tau=0$ by a theorem of M. Riesz.

Since a moment problem is characterized by two sequences $(a_n)_{n\ge 0}$ and $(b_n)_{n\ge 0}$ of respectively real and positive numbers (called the Jacobi parameters or recurrence coefficients) via the three term recurrence relation
\begin{equation}\label{eq:3term}
z r_n(z)=b_nr_{n+1}(z)+a_nr_n(z)+b_{n-1}r_{n-1}(z),\quad n\ge 0,
\end{equation}
satisfied by the orthonormal polynomials $P_n(z)$ and those of the second kind $Q_n(z)$, it is of some interest to be able to calculate the order and type directly from the sequences $a_n,b_n$ without calculating the functions of the Nevanlinna matrix. For $P_n$ we have the initial conditions
$P_{-1}=0,P_0=1$, while we have $Q_{-1}=-1,Q_0=0$ with the convention that $ b_{-1}:=1$.
 
Such results were obtained about order in the paper \cite{B:S} by the authors. In particular we obtained the following result:

\begin{thm}\label{thm:beralpha1} Assume that the coefficients $a_n,b_n$  satisfy 
\begin{equation}\label{eq:finite}
\sum_{n=1}^\infty \frac{1+|a_n|}{\sqrt{b_nb_{n-1}}}<\infty,
\end{equation}
and that $(b_n)$ is either eventually log-convex or eventually log-concave. 

Then the order  $\rho$ of the moment problem is equal to the exponent of convergence $\mathcal E(b_n)$of the sequence $(b_n)$, where
$$
\mathcal E(b_n)=\inf \left\{\alpha>0 \mid \sum_{n=0}^\infty
b_n^{-\alpha}<\infty \right\}.
$$
\end{thm}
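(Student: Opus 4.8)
The plan is to pin down $\rho$ by estimating the growth, on circles $|z|=r$, of the orthonormal polynomials $P_n$. Two standard inputs are used throughout. First, the hypotheses imply that the moment problem is indeterminate (limit circle), so $K(z,\bar z):=\sum_{n\ge0}|P_n(z)|^2$ is finite for every $z\in\C$; moreover, it is known (see \cite{B:P}) that $\rho$ is also the order of $z\mapsto K(z,\bar z)$, in the sense that $\rho=\limsup_{r\to\infty}\bigl(\log\log\max_{|z|=r}K(z,\bar z)\bigr)/\log r$. Second, \eqref{eq:finite} together with log-convexity or log-concavity of $(b_n)$ forces $(b_n)$ to be eventually strictly increasing with $\sum_n1/b_n<\infty$ (so $\mathcal E(b_n)\le1$), and for an eventually increasing sequence one has the elementary identity $\mathcal E(b_n)=\limsup_n(\log n)/\log b_n=\limsup_n(n\log n)/\log(b_0b_1\cdots b_{n-1})$.

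The lower bound $\rho\ge\mathcal E(b_n)$ is soft and uses neither the $a_n$ nor the regularity of $(b_n)$: the coefficient of $z^n$ in $P_n$ equals $1/(b_0b_1\cdots b_{n-1})$, so the mean value property gives $\max_{|z|=r}|P_n(z)|\ge r^n/(b_0\cdots b_{n-1})$, whence $\max_{|z|=r}K(z,\bar z)\ge\mu(r)^2$, where $\mu(r)=\sup_n r^n/(b_0\cdots b_{n-1})$ is the maximal term of the entire function $g(w)=\sum_n w^n/(b_0\cdots b_{n-1})$. Since an entire function and its maximal term have the same order, the order of $K$ is at least the order of $g$, which equals $\limsup_n(n\log n)/\log(b_0\cdots b_{n-1})=\mathcal E(b_n)$. (If $\mathcal E(b_n)=1$ this already gives $\rho=1$, since always $\rho\le1$; so from now on assume $\mathcal E(b_n)<1$.)

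For the reverse inequality $\rho\le\rho_0:=\mathcal E(b_n)$ fix $|z|=r$ large, pick $\Lambda>1$, and let $N=N(r)$ be the least index with $b_N\ge\Lambda r$; regularity of $(b_n)$ gives $N(r)\le r^{\rho_0+\eps}$ for $r$ large. I would split $K(z,\bar z)=\sum_{n\le N}|P_n(z)|^2+\sum_{n>N}|P_n(z)|^2$. On the allowed range $n\le N$ (where $b_n\le\Lambda r$) the crude transfer–matrix bound suffices: from $|P_n(z)|\le\prod_{k<n}\|T_k(z)\|$ with $\|T_k(z)\|\le1+b_{k-1}/b_k+(r+|a_k|)/b_k$, taking logarithms, and using $\sum_{k<N}|a_k|\le\Lambda r\sum_k|a_k|/\sqrt{b_kb_{k-1}}$ from \eqref{eq:finite}, one gets $\log\max_{|z|=r}|P_n(z)|\lesssim N(r)+\sum_{b_k\le r}\log(r/b_k)\lesssim N(r)\log r\lesssim r^{\rho_0+2\eps}$, uniformly in $n\le N$. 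On the oscillatory range $n>N$ (where $b_n\ge\Lambda r$) the crude bound collapses — it does not even recover $\sum_n|P_n(z)|^2<\infty$ — and one must instead run a discrete Liouville–Green (Levinson–type) asymptotic analysis of the three–term recurrence. This is exactly where log-convexity or log-concavity of $(b_n)$ enters: monotonicity of the increments $\log b_{n+1}-\log b_n$ yields summability of their differences (bounded variation of the coefficients after the natural WKB normalisation), which is Levinson's hypothesis, while $\sum_n(r+|a_n|)/b_n<\infty$ follows from \eqref{eq:finite}. The outcome is a bound of WKB type, $|P_n(z)|^2\le C\,|P_N(z)|^2\,\sqrt{b_{N-1}b_N}/\sqrt{b_{n-1}b_n}$ for $n>N$, with $C$ absolute. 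Summing and using $\sqrt{b_{N-1}b_N}\le\Lambda r$ and $\sum_n1/\sqrt{b_{n-1}b_n}<\infty$ (again \eqref{eq:finite}) one obtains $\sum_{n>N}|P_n(z)|^2\le C\Lambda r\bigl(\sum_n1/\sqrt{b_{n-1}b_n}\bigr)\max_{|z|=r}|P_N(z)|^2$, and therefore $\max_{|z|=r}K(z,\bar z)\le\exp\bigl(c\,r^{\rho_0+3\eps}\bigr)$ for $r$ large. Hence the order of $K$ is at most $\rho_0+3\eps$; letting $\eps\downarrow0$ and combining with the lower bound gives $\rho=\mathcal E(b_n)$.

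The one genuinely hard step is the oscillatory–range estimate $|P_n(z)|^2\lesssim|P_N(z)|^2\sqrt{b_{N-1}b_N}/\sqrt{b_{n-1}b_n}$ for $n>N$ with a constant independent of $r$: this amounts to carrying the Liouville–Green asymptotics of the recurrence through and past the turning index $b_n\approx r$, with all error terms summable uniformly in $r$, and it is precisely the role of the hypothesis ``$(b_n)$ eventually log-convex or log-concave'' to make this possible. The remaining ingredients (the two elementary facts about $\mathcal E(b_n)$, the maximal–term lower bound, and the transfer–matrix estimate on the allowed range) are routine by comparison.
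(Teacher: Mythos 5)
A preliminary remark: the paper does not actually prove Theorem~\ref{thm:beralpha1}; it quotes it from \cite{B:S}, so there is no in-paper proof to compare with, and your argument has to stand on its own. Its soft ingredients are fine: under \eqref{eq:finite} one has $\sqrt{b_nb_{n-1}}\to\infty$, so eventual log-convexity or log-concavity does force $(b_n)$ to be eventually nondecreasing; for such sequences $\mathcal E(b_n)=\limsup_n \frac{n\log n}{\log(b_0\cdots b_{n-1})}$ is correct; the lower bound via the leading coefficient $1/(b_0\cdots b_{n-1})$, the maximal term, and the identification of $\rho$ with the growth of $\sum_n|P_n(z)|^2$ (from \cite{B:P}, which the paper itself invokes in the proof of Lemma~\ref{thm:orderbelow}) is sound; and the transfer-matrix estimate on the range $b_k\le\Lambda r$, giving $\log\max_{|z|=r}|P_n(z)|\lesssim N(r)\log r$ with $N(r)\lesssim r^{\mathcal E(b_n)+\eps}$, is routine and correct.

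The genuine gap is exactly the step you flag as ``the one genuinely hard step'': the claimed bound $|P_n(z)|^2\le C\,|P_N(z)|^2\sqrt{b_{N-1}b_N}/\sqrt{b_{n-1}b_n}$ for $n>N$, uniformly in $r$, is the entire content of the theorem, and it is asserted as the ``outcome'' of a Levinson/discrete Liouville--Green analysis rather than proved. It does not follow from a routine citation of Levinson-type theorems: those give asymptotics as $n\to\infty$ for fixed $z$, whereas here everything must be uniform in the large parameter $r$, and the error terms of a discrete WKB normalisation degenerate precisely in the transition region around the turning index $b_n\approx r$, which is where the log-convexity/log-concavity hypothesis has to be exploited quantitatively (this is what the proof in \cite{B:S} actually does, and it is lengthy). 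Moreover, as formulated the estimate cannot be correct: $P_n$, $n>N$, is the solution of a second-order recurrence determined by its data at \emph{two} consecutive indices, so no bound by $|P_N(z)|^2$ alone is possible (at a point where $P_N(z)$ is small but $P_{N+1}(z)$ is not, the right-hand side collapses while the left does not); one needs at least $|P_N(z)|^2+|P_{N+1}(z)|^2$, together with a proof that the constant is independent of $r$ and of $\Lambda$. Finally, you also assert indeterminacy ``from the hypotheses''; for general $(a_n)$ under \eqref{eq:finite} this is itself a nontrivial Berezanskii-type statement (it is part of what \cite{B:S} establishes), not something that can be taken for granted. So what you have is a plausible strategy outline whose decisive estimate --- the only place the structural hypothesis on $(b_n)$ enters --- is missing.
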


This result is well suited for applications to symmetric moment problems characterized by $a_n=0$ for all $n$.
In this case and when the log-convexity/log-concavity condition  is satisfied, the condition \eqref{eq:finite} is equivalent to $\sum 1/b_n<\infty$, which in turn is equivalent to indeterminacy.

However, if the log-convexity/log-concavity is not satisfied, then the condition $\sum 1/b_n<\infty$ can hold also for determinate moment problems, see e.g. \cite[Remark 4.5]{B:S}.

It turns out that for a number of the classical indeterminate Stieltjes moment problems, where $a_n>0$, the condition \eqref{eq:finite} is not satisfied. This is in particular true for the Stieltjes moment problem associated with the birth and death rates \eqref{eq:birth} and \eqref{eq:death}, where $(1+|a_n|)/\sqrt{b_nb_{n-1}}$ converges to 2 for $n\to\infty$. 
To handle these cases we need some refinements of our results. 

The idea is to use the well-known one-to-one correspondence between Stieltjes moment problems and symmetric Hamburger moment problems as recalled in Section 3. If the order and type of an indeterminate symmetric Hamburger moment problem can be calculated, then the order and type of the corresponding Stieltjes moment problem are known too, see Proposition~\ref{thm:order/type} for details.

Before announcing our first main result we recall that a sequence of complex numbers $(x_n)$ belongs to $\ell^\a$, where $\a>0$, if $\sum |x_n|^\a<\infty$. The result reads:

\begin{thm}\label{thm:main1} Consider a symmetric indeterminate Hamburger moment problem and introduce the notation
\begin{equation}\label{eq:sym1}
v_n=P_{2n}^2(0),\quad u_n=Q_{2n-1}^2(0),\quad n\ge 1.  
\end{equation}
Assume that 
\begin{enumerate}
\item[\rm{(i)}] $(u_n)\in\ell^\alpha,\;(v_n)\in\ell^\beta$, where $0<\alpha,\beta\le 1$.
\end{enumerate}
Assume also that there exist constants $C,D>0$ such that
\begin{enumerate}
\item[\rm{(ii)}] $u_n\le Cv_n^{\beta/\alpha},\; n\ge 1$.
\item[\rm{(iii)}] $v_j\le Dv_i$ for $i\le j$.
\end{enumerate}
Let $\gamma$ be the harmonic mean of $\alpha$ and $\beta$, i.e.,
\begin{equation}\label{eq:harm}
\gamma^{-1}=\frac12\left(\alpha^{-1}+\beta^{-1}\right).
\end{equation}
Then the order $\rho$ of the moment problem is $\le\gamma$.
\end{thm}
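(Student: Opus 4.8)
The plan is to estimate the growth of the entire functions in the Nevanlinna matrix directly from the coefficients $P_{2n}(0)$ and $Q_{2n-1}(0)$. For a symmetric problem, the four Nevanlinna functions $A,B,C,D$ reduce essentially to two power series in $z^2$: one built from the values $P_{2n}(0)$ and another from $Q_{2n-1}(0)$. Concretely, functions like $\sum_n P_{2n}(0)^2 z^{2n}$ (and the analogous series with $Q_{2n-1}(0)^2$) — or rather their counterparts $\sum_n P_{2n}(0)Q_{2n}(0)$, etc. — have order equal to the order $\rho$ of the moment problem. So the first step is to recall from the structure of symmetric indeterminate problems that $\rho$ is governed by the maximal order among the two auxiliary entire functions $F(z)=\sum_n v_n z^n$ and $G(z)=\sum_n u_n z^n$ (with $v_n,u_n$ as in \eqref{eq:sym1}), where order of a power series $\sum c_n z^n$ with $c_n>0$ is read off via the classical formula $\rho_{F}=\limsup_n \frac{n\log n}{-\log c_n}$.

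Next I would convert the summability hypotheses into pointwise decay bounds on $u_n,v_n$. Since $(v_n)\in\ell^\beta$, the partial sums of $v_n^\beta$ are bounded; combined with the monotonicity-type condition (iii), which forces $v_n$ to be roughly non-increasing up to the constant $D$, one gets $v_n=O(n^{-1/\beta})$. Hypothesis (ii) then gives $u_n\le C v_n^{\beta/\alpha}=O(n^{-1/\alpha})$. Feeding these into the order formula yields that the series $F$ has order $\le\beta$ and $G$ has order $\le\alpha$. Taking the larger of the two would only give $\rho\le\max(\alpha,\beta)$, which is weaker than the claimed harmonic-mean bound, so the real content is that the relevant Nevanlinna functions are not $F$ and $G$ separately but \emph{products} (or Hadamard-type combinations) of one $P$-type series with one $Q$-type series.

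The key step, then, is the following: the entire functions controlling $\rho$ look like $H(z)=\sum_n w_n z^n$ where $w_n$ is (up to sign and bounded factors) a convolution $\sum_{i+j=n} \sqrt{v_i}\,\sqrt{u_j}$ or a sum of products $P_{2k}(0)Q_{2k}(0)$-style terms, so that $w_n$ inherits the decay $w_n = O\!\big(\sum_{i+j=n} i^{-1/(2\beta)} j^{-1/(2\alpha)}\big)$. A Laplace/Beta-integral estimate of this convolution gives $w_n=O(n^{1-1/(2\beta)-1/(2\alpha)})=O(n^{1-1/\gamma})$ with $\gamma$ the harmonic mean \eqref{eq:harm}. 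Plugging $c_n\asymp n^{1-1/\gamma}$ into $\rho_H=\limsup \frac{n\log n}{-\log c_n}$ gives $\rho_H\le\gamma$. I would carry this out in the order: (1) reduce $\rho$ to the order of an explicit convolution-type series; (2) derive $v_n=O(n^{-1/\beta})$ from (i) and (iii), hence $u_n=O(n^{-1/\alpha})$ from (ii); (3) estimate the convolution coefficients by a Beta-function integral; (4) apply the order formula. The main obstacle I anticipate is step (1): pinning down exactly which combination of the $P$- and $Q$-values the Nevanlinna functions are built from in the symmetric case, and verifying that the cross term (rather than $F$ or $G$ alone) really dominates — this is precisely where the hypothesis (ii) relating $u_n$ to $v_n^{\beta/\alpha}$ must be used to keep the convolution from being controlled by its worse factor alone. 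Handling the signs of $P_{2n}(0),Q_{2n-1}(0)$ (they alternate) requires passing to absolute values, which costs nothing for an upper bound on the order but must be done carefully.
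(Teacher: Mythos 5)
Your step (1) — the reduction of the order of the moment problem to an explicit entire function built from $u_n,v_n$ — is exactly where the argument breaks, and the fix is not a convolution. In the symmetric case the Nevanlinna matrix is the infinite non-commutative product $\prod_{n}(I-zV_n)(I+zU_n)$ with $V_n,U_n$ the rank-one matrices carrying $v_n$ and $u_n$; expanding it, the function that controls the order is $C(z)=m_{22}(z)=1+\sum_n a_{2n}z^{2n}$ with
\begin{equation*}
a_{2n}=(-1)^n\sum_{1\le k_1\le k_2<k_3\le k_4<\ldots<k_{2n-1}\le k_{2n}}u_{k_1}v_{k_2}\cdots u_{k_{2n-1}}v_{k_{2n}},
\end{equation*}
i.e.\ a $2n$-fold sum over interlacing indices, not a two-fold convolution $\sum_{i+j=n}\sqrt{v_i}\sqrt{u_j}$. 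The auxiliary series you propose, $\sum_n v_nz^n$, $\sum_n u_nz^n$, or their Cauchy product, are not even entire: under (i) and (iii) the coefficients decay only polynomially ($v_n=O(n^{-1/\beta})$, $u_n=O(n^{-1/\alpha})$, which you derive correctly), so those power series have radius of convergence $1$. Likewise your key estimate $w_n=O(n^{1-1/\gamma})$ is polynomial decay; inserted into $\rho=\limsup\,n\log n/(-\log|c_n|)$ it gives order $\infty$, not $\le\gamma$. Super-exponential decay of the coefficients — which here comes precisely from the length of the multi-index sum, since the interlacing forces $k_{2j}\ge j$ and hence roughly $(n!)^{-2/\gamma}$ behaviour — is indispensable, and no pairwise convolution of the two sequences can produce it.

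The paper's route, for comparison: after establishing the expansion above (Proposition 5.1), hypotheses (ii) and (iii) are used \emph{termwise} to bound each factor pair, $u_iv_j\le CD^{\beta(1/\gamma-1/\alpha)}(v_iv_j)^{\beta/\gamma}$ for $i\le j$, so that $|a_{2n}|$ is dominated by $K^{2n}$ times the coefficient of $z^{2n}$ in the canonical product $F(z)=\prod_n\bigl(1+v_n^{\beta/\gamma}z\bigr)^2$; since $(v_n^{\beta/\gamma})\in\ell^\gamma$ by (i), the exponent of convergence of the zeros of $F$ is $\le\gamma$, hence $\rho\le\rho_F\le\gamma$ (the cases $\alpha>\beta$, $\alpha<\beta$, $\alpha=\beta$ being handled with minor variations). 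So hypothesis (ii) is not there to stop a convolution from being dominated by its worse factor; it is there to convert every mixed product $u_{k_{2j-1}}v_{k_{2j}}$ inside the long multi-sum into a pure power of the $v$'s at the harmonic-mean exponent. If you want to salvage your outline, you must replace your step (1) by this product expansion (or an equivalent identification of $C(z)$), after which your pointwise decay estimates feed into a multi-zeta-type bound rather than a Beta-integral convolution bound.
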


The proof of Theorem~\ref{thm:main1} is given in Section 5.

\begin{rem}\label{thm:symm} {\rm Note that condition (iii) is fulfilled, if $(v_n)$ is eventually decreasing.

As the proof will show, a similar result is true if the conditions (ii) and (iii)  are replaced by the following  conditions, where the roles of $u_n$ and $v_n$ are interchanged
\begin{enumerate}
\item[\rm{(ii)}'] $v_n\le C' u_n^{\a/\b},\; n\ge 1$,
\item[\rm{(iii)'}] $u_j\le D' u_i$ for $i\le j$.
\end{enumerate}
Here $C',D'>0$ are suitable constants.
}
\end{rem}

Birth and death processes lead to Stieltjes moment problems according to a theory developed by Karlin and McGregor, see \cite{K:M} and \cite{I:L:M:V}. The theory depends on two sequences $(\lambda_n),(\mu_n)$ of birth and death rates with the constraints $\lambda_n,\mu_{n+1}>0,n\ge 0$ and $\mu_0\ge 0$. For simplicity we shall always assume $\mu_0=0$. 

The recurrence coefficients $(a_n),(b_n)$ of the corresponding Stieltjes moment problem are given by
\begin{equation}\label{eq:bdtoS}
a_n=\lambda_n+\mu_n,\quad b_n=\sqrt{\lambda_n\mu_{n+1}},\quad n\ge 0.
\end{equation}	

In a series of papers Valent and his co-authors studied Stieltjes moment problems coming from birth and death processes with polynomial rates, see
\cite{Va06} and references therein. In \cite{Va98} Valent formulated some conjectures in case of polynomial birth and death rates $(\lambda_n),(\mu_n)$ of the form 

\begin{equation}\label{eq:birth}
\lambda_n=(pn+e_1)\ldots(pn+e_p), \quad n\ge 0,
\end{equation}

and
\begin{equation}\label{eq:death}
\mu_n=(pn+d_1)\ldots(pn+d_p), \quad n\ge 0, 
\end{equation}
where it is assumed that
\begin{equation}\label{eq:natcond}
 0<e_1\le e_2\le \ldots\le e_p,\quad -p<d_1\le d_2\le\ldots\le d_p,\quad d_1\ldots d_p=0,
\end{equation}
so one  has $\lambda_n,\mu_{n+1}>0$ for $n\ge 0$ and $\mu_0=0$.

By introducing the quantities
\begin{equation}\label{eq:DE}
E=e_1+\cdots+e_p,\quad D=d_1+\cdots+d_p,
\end{equation}
Valent proved that the Stieltjes problem is indeterminate if and only if
\begin{equation}\label{eq:valindet}
1<\frac{E-D}{p}< p-1.
\end{equation}
This will also follow from the considerations in Section 4.

 In the special cases $p=3,4$ and for special values of $e_j,d_j,j=1,\ldots,p$, it has been possible to calculate the Nevanlinna matrices using elliptic functions, see \cite{B:V},\cite{G:L:V},\cite{G:L:R:V},\cite{Va06}. On the basis of these calculations Valent \cite{Va98} formulated the following conjecture:

{\bf Valent's conjecture} {\it For the indeterminate Stieltjes moment problem with the rates \eqref{eq:birth} and \eqref{eq:death} satisfying the condition 
\eqref{eq:valindet}
(hence $p\ge 3$), the order, type and Phragm{\'e}n-Lindel{\"o}f indicator function are given as
\begin{equation}\label{Valentconj}
\rho=1/p,\quad \tau=\int_0^1\frac{du}{(1-u^p)^{2/p}},\quad h(\theta)=\tau\cos((\theta-\pi)/p),\;\theta\in[0,2\pi].
\end{equation} 
}

Recently, Romanov \cite{Ro} has proved Valent's conjecture concerning order using the powerful theory of canonical systems.

\begin{thm}[Romanov \cite{Ro}]\label{thm:valent-order} The  indeterminate Stieltjes moment pro\-blem
with the rates \eqref{eq:birth} and \eqref{eq:death} satisfying \eqref{eq:valindet} has order $\rho=1/p$.
\end{thm}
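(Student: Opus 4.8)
The plan is to transfer the problem to the symmetric Hamburger moment problem attached to it by the correspondence of Section~3, to apply Theorem~\ref{thm:main1} there, and then to invoke Proposition~\ref{thm:order/type}, according to which the order of a Stieltjes problem equals one half of the order of its symmetrization. By \eqref{eq:valindet} the Stieltjes problem is indeterminate and $p\ge 3$, so its symmetrization is an indeterminate symmetric Hamburger moment problem and Theorem~\ref{thm:main1} applies once its hypotheses are verified. Throughout, $(\lambda_n),(\mu_n)$ are the rates \eqref{eq:birth}--\eqref{eq:death}, $(a_n),(b_n)$ the Stieltjes coefficients \eqref{eq:bdtoS}, and $E,D$ are as in \eqref{eq:DE}.

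The first step is to obtain closed forms for $v_n=P_{2n}^2(0)$ and $u_n=Q_{2n-1}^2(0)$ of the symmetrization. Since the even orthonormal polynomials of a symmetric problem are $P_{2n}(x)=p_n(x^2)$ with $p_n$ the orthonormal polynomials of the underlying Stieltjes problem, an induction on \eqref{eq:3term} with $a_n=\lambda_n+\mu_n$, $b_n^2=\lambda_n\mu_{n+1}$ and $\mu_0=0$ gives $p_n(0)^2=\prod_{k=0}^{n-1}\lambda_k/\mu_{k+1}$, so that
\[
v_n=P_{2n}^2(0)=\frac{\lambda_0\cdots\lambda_{n-1}}{\mu_1\cdots\mu_n}.
\]
For $u_n$ I would use $Q_{m+1}=\hat P_m/b_0$, where $\hat P_m$ are the orthonormal polynomials of the once-stripped Jacobi matrix $(a_{k+1},b_{k+1})_{k\ge 0}$; via the same correspondence the stripped, again symmetric, matrix is the symmetrization of the birth and death problem with rates $\hat\lambda_k=\mu_{k+1}$, $\hat\mu_k=\lambda_k$, and the previous identity applied to it yields
\[
u_n=Q_{2n-1}^2(0)=\frac1{\lambda_0}\,\frac{\mu_1\cdots\mu_{n-1}}{\lambda_1\cdots\lambda_{n-1}}.
\]

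The second step is asymptotics. From \eqref{eq:birth}--\eqref{eq:death},
\[
\log\frac{\lambda_k}{\mu_{k+1}}=\sum_{j=1}^p\log\!\Bigl(1+\frac{e_j-p-d_j}{pk+p+d_j}\Bigr)=-\frac{p^2-(E-D)}{p}\cdot\frac1k+O(k^{-2}),
\]
and, similarly, $\log(\mu_k/\lambda_k)=-\tfrac{E-D}{p}\,k^{-1}+O(k^{-2})$. Summing, and using $\sum_{k\le n}k^{-1}=\log n+O(1)$ and convergence of the $O(k^{-2})$ tails, one gets constants $c_1,c_2>0$ with
\[
v_n\sim c_1\,n^{-1/\beta},\qquad u_n\sim c_2\,n^{-1/\alpha},\qquad \frac1\beta=p-\frac{E-D}{p},\quad\frac1\alpha=\frac{E-D}{p}.
\]
By \eqref{eq:valindet} one has $0<\alpha,\beta<1$, and the harmonic mean is $\gamma^{-1}=\tfrac12(\alpha^{-1}+\beta^{-1})=p/2$, i.e.\ $\gamma=2/p$.

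Now I would verify the hypotheses of Theorem~\ref{thm:main1}, perturbing the exponents a little to absorb the fact that $(u_n),(v_n)$ are only borderline summable in $\ell^\alpha,\ell^\beta$. For small $\eps>0$ set $\alpha'=(1+\eps)\alpha$, $\beta'=(1+\eps)\beta$, still in $(0,1]$. Then $u_n^{\alpha'}\sim c_2^{\alpha'}n^{-(1+\eps)}$ and $v_n^{\beta'}\sim c_1^{\beta'}n^{-(1+\eps)}$ are summable, which is (i); since $\beta'/\alpha'=\beta/\alpha$ and $u_n/v_n^{\beta/\alpha}$ converges to a positive limit, (ii) holds; and $v_n$ is eventually decreasing because $v_n/v_{n-1}=\lambda_{n-1}/\mu_n\to 1^-$ under \eqref{eq:valindet}, which gives (iii) by Remark~\ref{thm:symm}. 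Thus Theorem~\ref{thm:main1} bounds the order of the symmetrization by $(1+\eps)\gamma$; letting $\eps\downarrow 0$ it is $\le 2/p$, and by Proposition~\ref{thm:order/type} the order of the Stieltjes problem is $\le 1/p$. The matching lower bound $\rho\ge 1/p$ is the easier direction — it follows, for instance, from the regular growth $b_n\sim c\,n^{p/2}$ of the symmetrized coefficients together with the known lower estimates for the order, or from the moment asymptotics — so that together with the upper bound one obtains $\rho=1/p$. The only genuinely delicate point is the asymptotic analysis of the products defining $v_n,u_n$: one must control the $O(k^{-2})$ remainders well enough to pass from the logarithmic estimates to the sharp equivalences $v_n\sim c_1n^{-1/\beta}$ and $u_n\sim c_2n^{-1/\alpha}$, which is exactly the computation that, carried one order further, brings in the multi-zeta values governing the type; the verification of the hypotheses of Theorem~\ref{thm:main1} and the limit $\eps\downarrow 0$ are routine.
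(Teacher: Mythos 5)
Your proposal is essentially the paper's own proof: transfer to the symmetric Hamburger problem, derive $v_n\sim c_1n^{-(p-(E-D)/p)}$ and $u_n\sim c_2n^{-(E-D)/p}$ from the products \eqref{eq:P2n}, check the hypotheses of Theorem~\ref{thm:main1} for slightly enlarged exponents with the ratio $\beta/\alpha$ kept fixed so that (ii) survives, and let $\varepsilon\to 0$ to get order $\le 2/p$ for the symmetric problem, hence $\le 1/p$ via Proposition~\ref{thm:order/type}. The lower bound that you delegate to ``known lower estimates'' is exactly what the paper supplies as Lemma~\ref{thm:orderbelow} (if $b_n\le C(n+1)^{\kappa}$ then the order is $\ge 1/\kappa$), applied with $\kappa=p/2$ to the symmetrized coefficients $b_n\sim(p/2)^{p/2}n^{p/2}$, which is the same mechanism you invoke.
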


Based on Theorem~\ref{thm:main1} we can give a new proof of Theorem~\ref{thm:valent-order}. Our method also leads to an estimate for the type of the moment problem in the previous theorem.
The methods in \cite{Ro} do not immediately lead to an upper bound for the type. Our estimates of the type depend on the following quantity.

 For real $p>2$ and $n\ge 1$ define the multi-zeta value\footnote{Observe that the inequalities between the indices $k_j$ are alternating between $\le$ and $<$: $k_{2j-1}\le k_{2j}<k_{2j+1}\le k_{2j+2}$}
\begin{equation}\label{eq:mz}
\gamma_n(p)=\sum_{1\le k_1\le k_2<\ldots<k_{2n-1}\le k_{2n}}\left(k_1k_2\ldots k_{2n-1}k_{2n}\right)^{-p/2}.
\end{equation}

\begin{thm}\label{thm:main2}
The  type $\tau$ of the indeterminate Stieltjes moment problem
with the rates \eqref{eq:birth} and \eqref{eq:death} satisfying \eqref{eq:valindet} is equal to $\tau_p/p$, where $\tau_p$ is the type of the entire function
\begin{equation}\label{eq:Gp}
G_p(z)=\sum_{n=1}^\infty \gamma_n(p) z^n
\end{equation}   
with coefficients $\gamma_n(p)$ given by \eqref{eq:mz}.
\end{thm}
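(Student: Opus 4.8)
The strategy is to reduce the Stieltjes problem to its associated symmetric Hamburger problem via the correspondence recalled in Section 3, and then to identify the relevant Nevanlinna functions explicitly in terms of the quantities $v_n=P_{2n}^2(0)$ and $u_n=Q_{2n-1}^2(0)$. First I would write down, for the symmetric Jacobi matrix with $a_n=0$ and $b_n^2=\lambda_n\mu_{n+1}$ (with $\lambda_n,\mu_n$ given by \eqref{eq:birth}, \eqref{eq:death}), the standard product formulas $P_{2n}(0)=\prod_{k=1}^{n}(-b_{2k-2}/b_{2k-1})$ and a similar alternating-product expression for $Q_{2n-1}(0)$. Since $b_j\sim (pj)^{p}$ up to the shift, the ratios telescope so that $v_n$ and $u_n$ are, up to bounded factors and a common geometric-type normalization, governed by partial products of $k^{-p}$; squaring and summing, the double sums that arise when one forms the entire functions $\sum v_n z^n$ or the Wronskian-type series are exactly the alternating-inequality multi-zeta sums $\gamma_n(p)$ of \eqref{eq:mz}. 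The key point is that one of the four Nevanlinna functions of the symmetric problem, restricted to the even grid and after the substitution $z\mapsto z^2$, is comparable (same order, same type) to $G_p(z)$.

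Second, I would invoke the general principle — used already in \cite{B:S} and in the proof of Theorem~\ref{thm:main1} — that the order and type of an indeterminate moment problem equal the order and type of any one of the Nevanlinna functions, and that for the symmetric problem these can be read off from the power series whose coefficients are built from $(v_n)$ and $(u_n)$. Concretely, the function $B(z)$ (or $D(z)$) of the Nevanlinna matrix, evaluated via its Taylor expansion, has coefficients that are finite sums of products $v_{k_1}\cdots$ subject to precisely the nesting $k_1\le k_2<k_3\le\cdots$; matching this against \eqref{eq:mz} shows the $n$-th Taylor coefficient is $\gamma_n(p)$ up to a factor $c^n(1+o(1))$ with $c$ an explicit constant coming from the asymptotics $b_n\sim (pn)^p$. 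By the Lindelöf formula relating type to coefficient growth, namely $\tau=\rho^{-1}(e\rho)^{-1}\limsup_n n\,|a_n|^{\rho/n}$ for a function of order $\rho$, and since here $\rho=1/p$, the type of the Stieltjes problem becomes $p^{-1}$ times the type $\tau_p$ of $G_p$, after one tracks how the variable substitution $z\mapsto z^2$ and the passage from Hamburger to Stieltjes (Proposition~\ref{thm:order/type}) rescale the type. The bookkeeping of these constant factors is where care is needed, but each factor is explicit.

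Third, to get the two-sided comparison (and hence equality, not just an inequality), I would use condition (iii)-type monotonicity of $(v_n)$ — which holds here because $b_n$ is increasing — to sandwich the true Nevanlinna coefficients between constant multiples of $\gamma_n(p)$ from both sides, so that the $n$-th roots converge to the same limit and the types coincide exactly. The upper estimate reuses Theorem~\ref{thm:main1} and its proof verbatim with $\alpha=\beta=1/p$ (so $\gamma=1/p$), giving $\rho\le 1/p$ and a matching type bound; the lower estimate comes from the explicit subseries of $G_p$ already present in the Nevanlinna function.

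The main obstacle I anticipate is not the identification of the combinatorial sum — that is essentially forced once the product formulas for $P_{2n}(0)$ and $Q_{2n-1}(0)$ are in hand — but rather controlling the \emph{subleading} factors in the asymptotics $b_n\sim(pn)^p$: the rates \eqref{eq:birth}, \eqref{eq:death} have lower-order terms $e_j,d_j$, so $b_{2k-2}/b_{2k-1}$ is not exactly $((2k-2)/(2k-1))^p$ but carries correction factors $1+O(1/k)$. One must show that the product of all these corrections over $k\le n$ stays in a fixed interval $[c^{-1},c]$ (or grows subexponentially in $n$), so that it does not perturb the $\limsup$ defining the type. This is a convergence-of-infinite-product argument: $\sum_k |{\rm correction}_k - 1|<\infty$ would suffice, and that follows from $\sum 1/k \cdot (\text{something } O(1/k))$ being summable once the $1/k$ terms are seen to cancel between the $\lambda$ and $\mu$ factors under \eqref{eq:natcond}. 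I would isolate this as a preliminary lemma before entering the main computation.
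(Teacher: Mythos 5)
Your overall strategy is the same as the paper's: pass to the symmetric Hamburger problem, use the product representation of the Nevanlinna matrix to identify the Taylor coefficients of one of its entries with the nested sums $\sum u_{k_1}v_{k_2}\cdots u_{k_{2n-1}}v_{k_{2n}}$ over $k_1\le k_2<k_3\le\cdots\le k_{2n}$ (this is Proposition~\ref{thm:status}), control the $1+O(1/k)$ corrections so they only perturb the coefficients sub-exponentially (this is Lemma~\ref{thm:precas} combined with Lemma~\ref{thm:type1}, essentially your anticipated ``preliminary lemma''), and finally transfer type back to the Stieltjes problem via Proposition~\ref{thm:order/type}. But there is a genuine gap at the central identification step. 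By \eqref{eq:asymp} one has $v_n\sim c_1 n^{-(p-(E-D)/p)}$ and $u_n\sim c_2 n^{-(E-D)/p}$: the two decay exponents are \emph{different} unless $E-D=p^2/2$, whereas $\gamma_n(p)$ in \eqref{eq:mz} has the uniform exponent $p/2$ on every index. Your plan treats the problem as if $\alpha=\beta=1/p$ (``reuses Theorem~\ref{thm:main1} and its proof verbatim with $\alpha=\beta=1/p$''), which is only the special symmetric case $e_j=p/2$, $d_j=0$; in general the Taylor coefficients are mixed-exponent sums, and comparing them two-sidedly with the uniform-exponent sums $\gamma_n(p)$ is exactly the content of the paper's Lemma~\ref{thm:more1} (inserting $\zeta$-factors, adjoining or deleting a summation variable, and shifting indices). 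That harmonic-mean comparison is the key missing idea; it cannot be obtained from Theorem~\ref{thm:main1}, which in any case only yields an upper bound for the \emph{order}, not ``a matching type bound'' (its proof majorizes by a canonical product whose type need not match). You also cannot argue a priori that the parameters $e_j,d_j$ may be replaced by the symmetric choice, since only finitely many coefficient changes are covered by Proposition~\ref{thm:finch}.

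Two smaller points of bookkeeping. The factor $1/p$ in $\tau=\tau_p/p$ does not come from the substitution $z\mapsto z^2$ nor from the Hamburger--Stieltjes passage (both preserve the type, cf.\ Proposition~\ref{thm:order/type}); it comes from the exponential factor $(c_1c_2)^n=p^{-pn}$ multiplying the coefficients, i.e.\ from $c_1c_2=p^{-p}$ in \eqref{eq:constant}, which rescales the type by $(c_1c_2)^{\rho/2}=1/p$ at order $\rho=2/p$. You do mention a factor $c^n$, so this is repairable, but your sandwich ``between constant multiples of $\gamma_n(p)$'' must be replaced by a sandwich between $(c_1c_2)^n\gamma_n(p)$ times sub-exponential factors. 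Also the asymptotics you quote, $b_j\sim(pj)^p$, should read $b_n\sim(p/2)^{p/2}n^{p/2}$ for the symmetric recurrence coefficients; the correct constants matter here because they produce the very factor $1/p$ in the statement.
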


\begin{rem}\label{thm:mainrem} {\rm  The function \eqref{eq:Gp} has order $1/p$ like the Stieltjes problem. This follows from the proof of Theorem~\ref{thm:main2}.}
\end{rem}

\begin{thm}\label{thm:valenttype} The  type $\tau$ of the indeterminate Stieltjes moment problem with the rates \eqref{eq:birth} and \eqref{eq:death} satisfying \eqref{eq:valindet} fulfils the inequalities
\begin{equation}\label{eq:type}
\frac{\pi}{p\sin(\pi/p)}\le \tau\le \frac{\pi}{p\sin(\pi/p)\cos(\pi/p)}.
\end{equation} 
\end{thm}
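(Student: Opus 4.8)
The plan is to invoke Theorem~\ref{thm:main2}, which reduces the assertion to the two-sided estimate
$$\frac{\pi}{\sin(\pi/p)}\le\tau_p\le\frac{\pi}{\sin(\pi/p)\cos(\pi/p)}$$
for the type $\tau_p$ of the entire function $G_p$ in \eqref{eq:Gp}, and then to sandwich $G_p$ between two canonical products of the form $\prod_{j\ge1}(1+z/j^a)$. For $a>1$ such a product is entire of order $1/a$ and type $\pi/\sin(\pi/a)$: writing $\log\prod_j(1+R/j^a)=\sum_{j\ge1}\log(1+R/j^a)$ and comparing the (decreasing) summand with $\int_0^\infty\log(1+R/x^a)\,dx=\tfrac{\pi}{\sin(\pi/a)}R^{1/a}$ (substitute $x=R^{1/a}u$ and use $\int_0^\infty\log(1+u^{-a})\,du=\pi/\sin(\pi/a)$, via integration by parts and the reflection formula) gives $\log\prod_j(1+R/j^a)=\tfrac{\pi}{\sin(\pi/a)}R^{1/a}+O(\log R)$. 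Set $q=p/2>1$ throughout.

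For the lower bound, restrict the sum in \eqref{eq:mz} to the diagonal $k_{2j-1}=k_{2j}=:m_j$. The constraints then collapse to $m_1<m_2<\cdots<m_n$, and the restricted sum equals $\sum_{m_1<\cdots<m_n}(m_1\cdots m_n)^{-p}$, i.e. the $n$-th Taylor coefficient of $\prod_{j\ge1}(1+z/j^p)$. As all coefficients are nonnegative, $G_p(R)+1\ge\prod_{j\ge1}(1+R/j^p)$ for $R>0$, whence $\tau_p\ge\pi/\sin(\pi/p)$.

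For the upper bound, the key remark is that the alternating inequalities $k_{2j-1}\le k_{2j}<k_{2j+1}$ force the odd-indexed variables $k_1<k_3<\cdots<k_{2n-1}$ and the even-indexed variables $k_2<k_4<\cdots<k_{2n}$ both to be strictly increasing. Dropping the remaining interleaving constraints enlarges the range of summation and decouples the two groups, so
$$\gamma_n(p)\le\Bigl(\sum_{m_1<\cdots<m_n}(m_1\cdots m_n)^{-q}\Bigr)^2=f_n^2,$$
where $f_n$ is the $n$-th Taylor coefficient of $F(z):=\prod_{j\ge1}(1+z/j^{q})$. Hence $G_p(R)+1\le\Phi(R):=\sum_{n\ge0}f_n^2R^n$ for $R>0$. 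Now $F$ has order $2/p$ and type at most $\pi/\sin(2\pi/p)$, and $\Phi$ is the Hadamard square of $F$; applying the classical formula $\tau=\tfrac{1}{e\rho}\limsup_n n|a_n|^{\rho/n}$ for the type, one sees that squaring the Taylor coefficients halves the order (so $\Phi$ has order $1/p$) and at most doubles the type, so the type of $\Phi$ is at most $2\pi/\sin(2\pi/p)=\pi/(\sin(\pi/p)\cos(\pi/p))$. Therefore $\tau_p\le\pi/(\sin(\pi/p)\cos(\pi/p))$.

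Finally, $G_p$ has order $1/p$ (Remark~\ref{thm:mainrem}; this also drops out of the two sandwiching bounds, which both exhibit order-$1/p$ growth), so $\tau_p=\limsup_{R\to\infty}\log G_p(R)/R^{1/p}$; the two estimates give the displayed two-sided bound on $\tau_p$, and $\tau=\tau_p/p$ from Theorem~\ref{thm:main2} yields \eqref{eq:type}. The step requiring the most care is the closing of the upper bound: one needs not only an upper bound for the type of the canonical product $F$ but also its exact order $2/p$, and then a correct bookkeeping of how squaring the coefficients changes order and type in the $\limsup$ formula; by contrast the two comparison inequalities for $\gamma_n(p)$ are immediate once the parity structure of \eqref{eq:mz} is noticed.
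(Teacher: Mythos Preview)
Your proof is correct and follows the same strategy as the paper's: sandwich $G_p$ between canonical products, using the diagonal restriction $k_{2j-1}=k_{2j}$ for the lower bound and the odd/even splitting of the indices for the upper bound, then read off the types of the comparison functions. The only cosmetic difference is that for the upper bound you pass through the Hadamard square $\Phi(R)=\sum f_n^2R^n$ and the coefficient formula for type, whereas the paper directly bounds $1+\sum_{n\ge1}\gamma_n(p)r^{2n}\le\prod_{j\ge1}(1+r/j^{p/2})^2$ and uses that squaring a canonical product of order $2/p$ keeps the order and doubles the type; both routes give $\tau_p\le 2\pi/\sin(2\pi/p)$.
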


We have not been able to prove that 
$$
\tau=\int_0^1\frac{du}{(1-u^p)^{2/p}}
$$
as conjectured by Valent, but we prove in Section 6 that the value of the integral lies
between the bounds of \eqref{eq:type}. This shows that the conjecture about type is asymptotically correct as $p\to\infty$.

There is a modification of the multi-zeta value \eqref{eq:mz} which also determines the type
of the moment problem in question. The modified multi-zeta value in Theorem~\ref{thm:main3} has the advantage in comparison with $\gamma_n(p)$ in \eqref{eq:mz} that the indices of summation are strictly increasing as in the classical multiple zeta values, cf. \cite{Z}.

\begin{thm}\label{thm:main3} For real $p>2$ and $n\ge 2$ define
\begin{equation}\label{eq:mz1}
\zeta_n(p)= \sum_{1\le k_1<k_2<\ldots<k_n}(k_2-k_1)(k_3-k_2)\ldots(k_n-k_{n-1})
(k_1k_2\ldots k_n)^{-p}.
\end{equation}
Then we have
\begin{equation}\label{eq:mz3}
\zeta_n(p) \le \gamma_n(p)\le
\zeta(p-1)\frac{2^{p/2-1}}{p/2-1}\zeta_{n-1}(p),\quad n\ge 3,
\end{equation}
where
$$
\zeta(x)=\sum_{k=1}^\infty k^{-x},\quad x>1
$$
is the Riemann zeta-function.
\end{thm}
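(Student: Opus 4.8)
The plan is to establish the two inequalities in \eqref{eq:mz3} separately, working directly with the combinatorial definitions \eqref{eq:mz} and \eqref{eq:mz1}. For the \emph{lower} bound $\zeta_n(p)\le\gamma_n(p)$, I would build an injection from the index set of $\zeta_n(p)$ into that of $\gamma_n(p)$ that only increases the summand. Given a strictly increasing tuple $1\le k_1<k_2<\cdots<k_n$, the factor $(k_{j+1}-k_j)$ counts the integers $m$ with $k_j\le m<k_{j+1}$ (there are exactly $k_{j+1}-k_j$ of them, namely $m=k_j,k_j+1,\dots,k_{j+1}-1$). Choosing one such $m$ in each of the $n-1$ gaps produces a $2n$-tuple; the trick is to check that inserting these new indices in the right positions yields a tuple satisfying the alternating pattern $\ell_1\le\ell_2<\ell_3\le\ell_4<\cdots$ of \eqref{eq:mz}, and that the product $(\ell_1\cdots\ell_{2n})^{-p/2}$ is bounded below by $(k_1\cdots k_n)^{-p}$ since each newly inserted index $m$ satisfies $m<k_{j+1}$, hence $m^{-p/2}\ge k_{j+1}^{-p/2}$ — wait, that goes the wrong way, so instead one uses $m\le k_{j+1}$... the correct bookkeeping is: the $2n$ indices $\ell$ consist of $k_1,\dots,k_n$ together with the $n-1$ chosen gap-points, and one pairs each gap-point $m\in[k_j,k_{j+1})$ with $k_{j+1}$ to get $m^{-p/2}k_{j+1}^{-p/2}\ge k_{j+1}^{-p}$; summing the $\binom{}{}$ choices over the gap reconstitutes the factor $(k_{j+1}-k_j)$ exactly (with a possible loss if several gap-points could coincide, which is why the inequality is not an equality). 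I would carry out this matching carefully and verify the alternating inequalities are respected.

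For the \emph{upper} bound, I would start from \eqref{eq:mz} and group terms by the ``even'' indices. Write each admissible tuple for $\gamma_n(p)$ as $k_1\le k_2<k_3\le k_4<\cdots<k_{2n-1}\le k_{2n}$ and sum first over $k_1$ for fixed $k_2$: since $1\le k_1\le k_2$, the inner sum $\sum_{k_1=1}^{k_2}k_1^{-p/2}$ is at most $\zeta(p/2)$ — but the stated bound involves $\zeta(p-1)$, so the grouping must pair a \emph{low} index with a \emph{high} one rather than summing a free variable. The right move is: bound $k_{2j-1}^{-p/2}k_{2j}^{-p/2}\le k_{2j}^{-p}\cdot(k_{2j}/k_{2j-1})^{p/2}$ and control the ratio; alternatively, substitute $k_{2j-1}=k_{2j}-r_j$ with $0\le r_j<k_{2j}-k_{2j-2}$ and use $k_{2j-1}^{-p/2}=(k_{2j}-r_j)^{-p/2}$. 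Summing over $r_j$ the geometric-like tail $\sum_{r\ge0}(k_{2j}-r)^{-p/2}$ is dominated by something like $\frac{2^{p/2-1}}{p/2-1}k_{2j}^{1-p/2}$ when $r$ ranges up to $k_{2j}/2$ (here $2^{p/2-1}/(p/2-1)$ is exactly the constant $\int$-type bound for $\sum_{r<k/2}(k-r)^{-p/2}\le\int_0^{k/2}(k-x)^{-p/2}dx$), and the factor $k_{2j}^{1-p/2}$ combines with $k_{2j}^{-p/2}$ to give $k_{2j}^{1-p}$; after relabeling the surviving $n-1$ indices $k_2<k_4<\cdots<k_{2n}$ as a strictly increasing tuple and reading off the telescoping differences, this reproduces the shape of $\zeta_{n-1}(p)$ with the claimed constant, and the one ``leftover'' low index $k_1\le k_2$ contributes the extra $\zeta(p-1)$ factor (rather than $\zeta(p/2)$) because it too should be absorbed via a ratio bound $k_1^{-p/2}\le k_1^{-(p-1)}\cdot k_1^{-(1-p/2)}$ type manipulation — I will need to pin down this last step precisely.

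The main obstacle, I expect, is the bookkeeping in the upper bound: matching the alternating inequality structure of $\gamma_n$ to the strict inequalities of $\zeta_{n-1}$ while tracking which index gets ``sacrificed'' to produce the $\zeta(p-1)$ factor, and getting the constant $2^{p/2-1}/(p/2-1)$ to come out exactly rather than up to an unspecified multiple. This requires choosing the integral comparison $\sum_{r}(k-r)^{-p/2}$ with the correct range (stopping at $k/2$ so that $(k-r)\ge k/2$ is \emph{not} the bound used, but rather integrating) and being careful that the constant is $\int_1^\infty x^{-p/2}\,dx$-like. A secondary subtlety is ensuring the lower-bound injection is genuinely well-defined — i.e., that the inserted gap-points never collide with existing indices in a way that violates the strict ``$<$'' constraints — which may force a slightly asymmetric insertion rule (insert $m$ just to the left of $k_{j+1}$, so $m=k_{j+1}-1,\dots,k_j$, and pair it on the ``$\le$'' side). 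Once these two combinatorial lemmas are in place, \eqref{eq:mz3} follows by stringing them together with the elementary bound $\zeta_{n-1}(p)<\infty$ for $p>2$ (needed only to know the quantities are finite, which also re-proves the order claim of Remark~\ref{thm:mainrem}).
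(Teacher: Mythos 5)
The main gap is in your upper bound, and it is structural rather than cosmetic. In the correct argument the difference factors that define $\zeta_{n-1}(p)$ come from a \emph{counting} step, not from an integral estimate: one keeps the odd-indexed variables and sums out each even-indexed $k_{2j}$ ($j\le n-2$) using only the crude bound $(k_{2j-1}k_{2j})^{-p/2}\le k_{2j-1}^{-p}$, so that the sum over the $k_{2j+1}-k_{2j-1}$ admissible values $k_{2j}\in[k_{2j-1},k_{2j+1})$ produces exactly the factor $k_{2j+1}-k_{2j-1}$; these counts are the sole origin of the weights $(k_2-k_1)\cdots(k_{n-1}-k_{n-2})$. The tail estimate $\sum_{k\ge a}k^{-p/2}\le\frac{2^{p/2-1}}{p/2-1}a^{1-p/2}$ is then used exactly once, for the single unbounded variable $k_{2n}$, and the resulting factor $k_{2n-1}^{1-p}$ is summed once more to give $\zeta(p-1)$. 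Your plan instead keeps the even-indexed variables (miscounted as $n-1$; they are $n$) and applies a tail bound of the form $\sum_{r}(k_{2j}-r)^{-p/2}\le\frac{2^{p/2-1}}{p/2-1}k_{2j}^{1-p/2}$ at \emph{every} pair: that bound is only valid when $r$ is restricted to $r\le k_{2j}/2$, i.e.\ when $k_{2j-2}\ge k_{2j}/2$, which is not available; even granting it, you would get the constant raised to the $n$-th power (which destroys the type comparison the theorem is designed for) and, more importantly, no difference factors at all, so the resulting sum is not $\zeta_{n-1}(p)$. Finally, the $\zeta(p-1)$ factor does not come from the low index $k_1$ (your proposed ratio manipulation $k_1^{-p/2}\le k_1^{-(p-1)}k_1^{-(1-p/2)}$ even goes the wrong way, since $1-p/2<0$); it comes from summing the largest surviving index. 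The steps you defer ("pin down precisely") are exactly the content of the proof.

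Your lower bound is the right idea — it is the paper's argument read as an injection, with the factor $k_{j+1}-k_j$ realized as the number of choices of an inserted index — but the insertion rule you settle on fails. If the gap-points are taken in $[k_j,k_{j+1})$ and the surviving $k_j$ occupy the odd positions, the forced $2n$-th index must sit at the top ($\ge k_n$), and then $k_1$ has no partner $\le k_1$: pairing everything optimally only yields a summand $\ge(k_1\cdots k_n)^{-p}(k_1/k_n)^{p/2}$, which is too small. The correct bookkeeping is the mirror image: place the $k_j$ at the even positions, choose the gap-point $m_j\in\,]k_j,k_{j+1}]$ (still $k_{j+1}-k_j$ choices, and the strict inequality $k_j<m_j$ required by the pattern holds), pair $m_j\le k_{j+1}$ to get $m_j^{-p/2}k_{j+1}^{-p/2}\ge k_{j+1}^{-p}$, and supply the missing $2n$-th index at the \emph{bottom}, as a first index $\le k_1$ (e.g.\ equal to $k_1$), paired with $k_1$. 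With that repair the left inequality follows; without it, and without the correct mechanism for the right inequality, the proposal does not prove \eqref{eq:mz3}.
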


\begin{cor} For real $p>2$ the function 
\begin{equation}\label{eq:Zp}
Z_p(z)=\sum_{n=2}^\infty \zeta_n(p) z^n
\end{equation}
has the same order $1/p$ and the same type $\tau_p$ as the function $G_p$ given in \eqref{eq:Gp}.
\end{cor}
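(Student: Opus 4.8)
The plan is to deduce everything from the two-sided estimate \eqref{eq:mz3} of Theorem~\ref{thm:main3}, which is already proved. Recall that for a power series $F(z)=\sum c_n z^n$ with $c_n>0$ the order $\rho$ and type $\tau$ are recovered from the coefficients by the classical formulas
\begin{equation}\label{eq:ordtypeformula}
\frac1\rho=\liminf_{n\to\infty}\frac{\log(1/c_n)}{n\log n},\qquad
(e\rho\tau)^{1/\rho}=\limsup_{n\to\infty} n^{1/\rho}\,c_n^{1/n}.
\end{equation}
So it suffices to check that replacing $\gamma_n(p)$ by $\zeta_n(p)$ changes neither of these two limit quantities. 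By Remark~\ref{thm:mainrem} we already know $G_p$ has order $1/p$ and type $\tau_p$; the task is to transfer this to $Z_p$.

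First I would establish the order statement. From \eqref{eq:mz3} we have $\zeta_n(p)\le\gamma_n(p)$ and $\gamma_n(p)\le A\,\zeta_{n-1}(p)$ for $n\ge 3$, where $A=\zeta(p-1)2^{p/2-1}/(p/2-1)$ is a constant depending only on $p$. Iterating the second inequality along the lines $\gamma_{n+1}(p)\le A\,\zeta_n(p)\le A\,\gamma_n(p)$ and combining with the first gives the sandwich
\begin{equation}\label{eq:sandwich}
\zeta_n(p)\le\gamma_n(p)\le A\,\gamma_{n-1}(p)\quad\text{and}\quad \gamma_{n+1}(p)\le A\,\zeta_n(p)\le A\,\gamma_n(p),\qquad n\ge 3.
\end{equation}
In particular $\zeta_n(p)$ and $\gamma_n(p)$ differ multiplicatively by at most a shift of index by one and a factor $A^{\pm1}$. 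Plugging this into the order formula in \eqref{eq:ordtypeformula}: since $\log(1/\gamma_{n-1}(p))=\log(1/\gamma_n(p))+O(\log n)$ (using that $\gamma_n(p)\ge\gamma_{n+1}(p)/A$, the ratios are bounded below, so $\log\gamma_n$ has bounded increments up to the constant $\log A$), the quantity $\log(1/\zeta_n(p))/(n\log n)$ has the same $\liminf$ as $\log(1/\gamma_n(p))/(n\log n)$, namely $p$. Hence $Z_p$ has order $1/p$.

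For the type I would substitute \eqref{eq:sandwich} into the type formula with $\rho=1/p$: we must show $\limsup n^{p}\zeta_n(p)^{1/n}=\limsup n^{p}\gamma_n(p)^{1/n}$. The inequality $\zeta_n(p)\le\gamma_n(p)$ gives ``$\le$'' immediately. For the reverse, from $\gamma_{n+1}(p)\le A\,\zeta_n(p)$ we get $\gamma_{n+1}(p)^{1/n}\le A^{1/n}\zeta_n(p)^{1/n}$, and since $A^{1/n}\to1$, $\bigl((n+1)/n\bigr)^{p}\to1$ and $\gamma_{n+1}(p)^{1/n}=\bigl(\gamma_{n+1}(p)^{1/(n+1)}\bigr)^{(n+1)/n}$, taking $\limsup$ yields $\limsup (n+1)^{p}\gamma_{n+1}(p)^{1/(n+1)}\le\limsup n^{p}\zeta_n(p)^{1/n}$, i.e. ``$\ge$''. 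Therefore the two $\limsup$s coincide, so $Z_p$ and $G_p$ have the same type $\tau_p$. The only mildly delicate point — and the place where one must be a little careful — is the bookkeeping with the index shift and the constant $A$ in \eqref{eq:ordtypeformula}: one needs that an index shift by $1$ and multiplication by a fixed constant are invisible to both the $\liminf$ defining $1/\rho$ and the $\limsup$ defining the type, which is exactly what the elementary limit computations $A^{1/n}\to1$ and $(1+1/n)^{p}\to1$ provide. Everything else is a direct application of the standard coefficient formulas for order and type.
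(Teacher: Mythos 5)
Your proposal is correct and follows essentially the same route as the paper, which treats the corollary as an immediate consequence of the two-sided estimate \eqref{eq:mz3} combined with the coefficient formulas \eqref{eq:deforder} and \eqref{eq:deftype}: an index shift by one and a fixed multiplicative constant are invisible both to the $\liminf$ giving the order and to the $\limsup$ giving the type. One small point: your parenthetical claim $\log(1/\gamma_{n-1}(p))=\log(1/\gamma_n(p))+O(\log n)$ is justified by $\gamma_{n-1}(p)\ge \gamma_n(p)/A$ in one direction only, but it is also unnecessary, since the sandwich $\log(1/\gamma_n(p))\le\log(1/\zeta_n(p))\le\log(1/\gamma_{n+1}(p))+\log A$ together with the shift-invariance of the $\liminf$ already yields the order statement.
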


\begin{rem} {\rm The previous results show that the order and type of the indeterminate Stieltjes moment problem associated with the rates \eqref{eq:birth} and \eqref{eq:death} are  independent of the
special values of the parameters $e_j,d_j$. It is therefore enough to consider the special case where $e_j=p/2, d_j=0,j=1,\ldots,p$. In this case $(E-D)/p=p/2$ so the indeterminacy condition \eqref{eq:valindet} holds when $p\ge 3$. In this case the recurrence coefficients $(b_n)$ of the corresponding symmetric indeterminate Hamburger problem
are given as
$$
b_n=(p/2)^{p/2}(n+1)^{p/2}.
$$
}
\end{rem}

In the formula for $b_n$ above we can let $p$ can be any positive real number, and the corresponding Hamburger moment problem is indeterminate if and only if $p>2$, cf. \cite[Example 4.12]{B:S}.
This leads to the following result:

\begin{thm}\label{thm:last} Let $c>1$. The indeterminate symmetric Hamburger moment problem associated with $b_n=n^c$ for $n\ge 1$ has order $1/c$ and type $T_c/2$, where $T_c$ is the common type of the functions $G_{2c}(z)$ and $Z_{2c}(z)$.
\end{thm}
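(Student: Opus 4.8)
The plan is to extract the two assertions from results already in place, with only soft additional arguments: the order from Theorem~\ref{thm:beralpha1}, and the type by transporting the conclusion of Theorem~\ref{thm:main2} — together with Theorem~\ref{thm:main3} and its corollary — from the birth and death problem to the problem with $b_n=n^c$, via two elementary invariances of order and type, namely a rescaling of the recurrence coefficients and a shift of the Jacobi index. For the order: since $a_n=0$ and $b_n=n^c$, the general term of the series in \eqref{eq:finite} is $1/\sqrt{n^c(n-1)^c}\sim n^{-c}$, so that series converges because $c>1$, and $\log b_n=c\log n$ is concave so $(b_n)$ is log-concave; hence Theorem~\ref{thm:beralpha1} applies (in particular the problem is indeterminate, cf.\ \cite[Example~4.12]{B:S}) and gives $\rho=\mathcal E(n^c)=1/c$.

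\emph{Type of the model problem.} Put $p:=2c>2$. As recalled in the discussion preceding the statement, the symmetric indeterminate Hamburger moment problem with $b_n=(p/2)^{p/2}(n+1)^{p/2}=c^{c}(n+1)^{c}$ is the one associated, via Proposition~\ref{thm:order/type}, with the birth and death Stieltjes problem corresponding to $e_j=p/2,\ d_j=0$, which by the remark following the corollary to Theorem~\ref{thm:main3} has the same order and type as the general problem treated in Theorems~\ref{thm:main2} and~\ref{thm:valenttype}. Combining Theorem~\ref{thm:main2}, the identification (from Theorem~\ref{thm:main3} and its corollary) of $\tau_{2c}$ with the common type $T_c$ of $G_{2c}$ and $Z_{2c}$, and Proposition~\ref{thm:order/type}, I conclude that the symmetric problem with $b_n=c^{c}(n+1)^{c}$ has order $1/c$ and type $\tau_p/p=T_c/(2c)$.

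\emph{Rescaling and shift.} To reach $b_n=n^c$ I rescale and then shift the index, tracking the effect on the type. If $\widetilde b_n=\lambda b_n$, a direct computation from \eqref{eq:3term} gives $\widetilde P_n(z)=P_n(z/\lambda)$ and $\widetilde Q_n(z)=\lambda^{-1}Q_n(z/\lambda)$, so each of the four entire functions of the Nevanlinna matrix of the rescaled problem is a nonzero constant times $w\mapsto F(w/\lambda)$ for the corresponding $F$ of the original; hence the order is unchanged and the type is multiplied by $\lambda^{-\rho}$. Taking $\lambda=c^{-c}$ and $\rho=1/c$ multiplies the type by $(c^{-c})^{-1/c}=c$, so the problem with $b_n=(n+1)^{c}$ has type $c\cdot T_c/(2c)=T_c/2$. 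For the shift, the problem with $b_n=n^c$ for $n\ge1$ and any admissible $b_0>0$ reduces, on deleting $b_0$, to the problem with $b_n=(n+1)^c$: its orthonormal polynomials are $\widehat P_n=b_0Q_{n+1}$ and its polynomials of the second kind are $\widehat Q_n=b_0^{-1}\bigl(zQ_{n+1}-P_{n+1}\bigr)$, so the four functions of the shifted Nevanlinna matrix are, up to constant multiples and relabelling, combinations of $A,B,C,D$ of the original problem; in particular the order and type coincide and are independent of $b_0$. Assembling these steps yields order $1/c$ and type $T_c/2$ for $b_n=n^c$.

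I expect the only genuine work to lie in the third paragraph: establishing the transformation rules for $P_n,Q_n$ under rescaling and under the index shift, and reading off from the explicit form of the Nevanlinna matrix that rescaling multiplies the type by $\lambda^{-\rho}$ while the shift — and hence the choice of $b_0$ — leaves both order and type unchanged. The substantive analytic content, i.e.\ the appearance of the multi-zeta values $\gamma_n(p)$ and $\zeta_n(p)$ and of the functions $G_p,Z_p$, has already been supplied by Theorems~\ref{thm:main2} and~\ref{thm:main3}.
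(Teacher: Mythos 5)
Your order argument and your rescaling/shift bookkeeping are fine (the rescaling step is in fact already Proposition~\ref{thm:Jacobichange}, and the shift is the content of Proposition~\ref{thm:finch} via \cite{Pe}), but the central step of your type argument has a genuine gap: you set $p:=2c$ and invoke Theorem~\ref{thm:main2} (together with the remark that order and type do not depend on $e_j,d_j$) for the birth and death problem with $e_j=p/2$, $d_j=0$. Those results are only available when $p$ is an integer $\ge 3$, because the rates \eqref{eq:birth}, \eqref{eq:death} are products of $p$ linear factors; for a general real $c>1$ (say $c=1.3$, so $2c=2.6$) there is no such birth and death problem, so Theorem~\ref{thm:main2} simply does not apply. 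This is exactly the point of the remark preceding Theorem~\ref{thm:last}: in $b_n=(p/2)^{p/2}(n+1)^{p/2}$ one may let $p$ be any real number $>2$, and Theorem~\ref{thm:last} is the extension of the type identification to this continuum of exponents, so it cannot be obtained by transporting the integer-$p$ theorem. Your proposal proves the statement only for the discrete family $c\in\{3/2,2,5/2,\dots\}$.

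The way to close the gap is to bypass the birth and death machinery and use the general symmetric results of Section 5 directly, which is what the paper does: after the shift to $b_n=(n+1)^c$, the products \eqref{eq:P2n} telescope to $P_{2n}^2(0)=\bigl((2n)!/(2^{2n}(n!)^2)\bigr)^{2c}$ and $Q_{2n+1}^2(0)=\bigl(2^{2n}(n!)^2/(2n+1)!\bigr)^{2c}$, and Stirling's formula gives $v_n=\pi^{-c}n^{-c}(1+O(1/n))$, $u_n=(\pi/4)^c n^{-c}(1+O(1/n))$, i.e.\ the hypotheses \eqref{eq:add} with $\a=\b=\g=1/c$ and $c_1c_2=2^{-2c}$, where nothing requires $2c$ to be an integer. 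Lemma~\ref{thm:type1} (with $s_n(1)=\g_n(2c)$, defined for all real $p>2$) then identifies the type of the problem with the type of $T_1(z)=\sum_n\g_n(2c)(2^{-c}z)^{2n}$, and since the order is $1/c$ the factor $2^{-c}$ halves the type, giving $T_c/2$; Theorem~\ref{thm:main3} and its corollary supply the equality of the types of $G_{2c}$ and $Z_{2c}$. So the "substantive analytic content" you defer to Theorems~\ref{thm:main2}--\ref{thm:main3} must in this proof be re-derived from Lemmas~\ref{thm:type1} and \ref{thm:more1} applied to the explicit $u_n,v_n$ above; that computation is the missing piece of your argument.
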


Using \eqref{eq:deftype} we have
\begin{equation}\label{eq:Tc1}
T_c=\frac{c}{e}\limsup_{n\to\infty}\left(n(\g_n(2c))^{1/(cn)}\right)=
\frac{c}{e}\limsup_{n\to\infty}\left(n(\zeta_n(2c))^{1/(cn)}\right),
\end{equation}
but we do not know how to evaluate $T_c$.
In accordance with Valent's conjecture we believe that $T_c$ is given in terms of the Beta-function as
\begin{equation}\label{eq:Tc2}
T_c=B(1/(2c),1-1/c).
\end{equation}

\section{Preliminaries} 

  Let $\mathbb M^\ast(\R)$ denote the set of positive measures on
$\R$ with moments of any order and infinite support. For $\mu\in\mathbb M^\ast(\R)$ we define the moment sequence
\begin{equation}\label{eq:Hmom} 
s_n=s_n(\mu)=\int_{-\infty}^\infty x^n\,d\mu(x),\quad n\ge 0.
\end{equation}
It is called normalized if $s_0=\mu(\R)=1$.
The theory of the indeterminate moment problem is treated in the classical monographs \cite{Ak}, \cite{S:T} and in the survey paper \cite{Si}, which introduced important new ideas. Here we follow the terminology and notation from Akhiezer  \cite{Ak}.  We shall also rely on concepts and notation introduced in our paper \cite{B:S}. The following is a classical result, which follows from proofs in \cite[Chap.1, Sect. 3]{Ak}: 

\begin{thm}\label{thm:indet} For $(s_n)$ as in \eqref{eq:Hmom} the
  following conditions are equivalent:
\begin{enumerate}
\item[\rm{(i)}]$\sum_{n=0}^\infty \left(P_n^2(0)+Q_n^2(0)\right)<\infty,$
\item[\rm{(ii)}] $ P(z)=\left(\sum_{n=0}^\infty |P_n(z)|^2\right)^{1/2}<\infty,\quad z\in\mathbb C.$
\end{enumerate}
If {\rm (i)} and {\rm (ii)} hold (the indeterminate case), then 
$Q(z)=\left(\sum_{n=0}^\infty |Q_n(z)|^2\right)^{1/2}<\infty$ for $z\in\mathbb C$,
 and the series for $P,Q$ are uniformly convergent on compact subsets of $\C$.
\end{thm}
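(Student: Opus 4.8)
The plan rests on two elementary algebraic facts about the fundamental system $\{P_n,Q_n\}$ of the recurrence \eqref{eq:3term}, after which the analysis splits into the two implications. First I would record the Casoratian (discrete Wronskian) identity: multiplying the recurrence for $P_n(z)$ by $Q_n(z)$, the one for $Q_n(z)$ by $P_n(z)$, and subtracting, the $z$-terms cancel and one gets $b_n\big(P_{n+1}(z)Q_n(z)-P_n(z)Q_{n+1}(z)\big)=b_{n-1}\big(P_n(z)Q_{n-1}(z)-P_{n-1}(z)Q_n(z)\big)$, so this quantity is independent of $n$; evaluating at $n=0$ with the initial data $P_0=1$, $Q_0=0$, $b_0Q_1=1$ gives
$$
b_n\big(P_{n+1}(z)Q_n(z)-P_n(z)Q_{n+1}(z)\big)=1\qquad(n\ge 0).
$$
The crucial point is that this holds for \emph{every} $z\in\C$. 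Since $(P_n(0))$ and $(Q_n(0))$ are then a fundamental system with constant Casoratian for the homogeneous equation at $z=0$, the discrete variation-of-parameters method expresses $P_n(z)$, which solves the same recurrence with inhomogeneity $zP_n(z)$, as
$$
P_n(z)=P_n(0)+z\sum_{k=0}^{n-1}\big(P_n(0)Q_k(0)-Q_n(0)P_k(0)\big)P_k(z),
$$
and an identical formula holds for $Q_n(z)$ with $P_k(z)$ replaced by $Q_k(z)$ on the right.

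For the implication (i)$\Rightarrow$(ii) I would run a Gronwall-type contraction on the partial sums. Writing $A=\|(P_n(0))\|_{\ell^2}$, $B=\|(Q_n(0))\|_{\ell^2}$ (finite by (i)) and $S_N(z)=\big(\sum_{n\le N}|P_n(z)|^2\big)^{1/2}$, Cauchy--Schwarz applied inside the sum and then Minkowski across $n$ turn the variation-of-parameters formula into the self-improving bound $S_N(z)\le A+2|z|AB\,S_N(z)$, uniformly in $N$. Hence $S_N(z)\le A/(1-2|z|AB)$ and $P(z)<\infty$ whenever $|z|<1/(2AB)$; the same estimate gives $Q(z)<\infty$ there, and estimating the tail $\sum_{n>N}$ in the same way shows the series for $P$ and $Q$ converge uniformly on compact subsets of that disk. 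To reach all of $\C$ I would re-centre: since the Casoratian identity, and hence the variation-of-parameters formula, hold verbatim with the base point $0$ replaced by any $w$ (with $P_k(w),Q_k(w)$ in place of $P_k(0),Q_k(0)$), the same contraction propagates finiteness and local uniform convergence from $w$ to the disk $|z-w|<1/(2\|P(w)\|\,\|Q(w)\|)$; as these radii are bounded below on compacta, a standard continuation argument covers $\C$. This simultaneously yields (ii), the finiteness of $Q(z)$, and the uniform-convergence statement.

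The converse (ii)$\Rightarrow$(i) is where I expect the real difficulty. Putting $z=0$ in (ii) gives $\sum_n P_n^2(0)<\infty$ for free, so everything reduces to producing $\sum_n Q_n^2(0)<\infty$. Solving the variation-of-parameters identity for $Q_n(0)$ and trying to bound $\sum_{n\le N}Q_n^2(0)$ directly fails: the resulting self-improving inequality has contraction constant $A\,P(z)/|\sum_k P_k(0)P_k(z)|$, which by Cauchy--Schwarz is $\ge 1$ for every $z\neq 0$, so the naive estimate never closes. I would therefore argue through Weyl's limit-point/limit-circle dichotomy for the Jacobi matrix $J$: the hypothesis that $(P_n(z))\in\ell^2$ for one (equivalently every) non-real $z$ says precisely that $\ker(J^\ast-z)\neq\{0\}$, i.e. the deficiency indices are $(1,1)$ and $J$ is in the limit-circle case; but in the limit-circle case \emph{every} solution of the recurrence lies in $\ell^2$ for every $z\in\C$, and applying this at $z=0$ to the solution $(Q_n(0))$ gives $\sum_n Q_n^2(0)<\infty$.

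The main obstacle is thus this single step: the passage from ``one $\ell^2$ solution at a non-real point'' to ``all solutions in $\ell^2$''. Its proof is again the contraction of the second paragraph, now anchored at a non-real base point where both solutions are controlled, combined with the constancy of the deficiency index across each open half-plane. Everything else — the Casoratian identity, the variation-of-parameters representation, and the Minkowski estimates giving (i)$\Rightarrow$(ii) together with the finiteness of $Q$ and the local uniform convergence — is routine once these tools are in place.
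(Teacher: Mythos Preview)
The paper does not prove Theorem~\ref{thm:indet}; it is quoted there as a classical result with a reference to Akhiezer \cite[Chap.~1, Sect.~3]{Ak}, so there is no in-paper argument to compare against. Your sketch is a correct reconstruction of that classical proof: the Casoratian identity together with the variation-of-parameters representation give the self-improving bound $S_N(z)\le A+2|z|AB\,S_N(z)$, which yields (i)$\Rightarrow$(ii) along with the finiteness of $Q$ and the local uniform convergence, and you correctly identify that (ii)$\Rightarrow$(i) is the Weyl limit-circle alternative for the Jacobi matrix (square-summability of $(P_n(z))$ at a single non-real $z$ forces deficiency indices $(1,1)$, hence all solutions, in particular $(Q_n(0))$, lie in $\ell^2$). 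One harmless slip: with the paper's initial data $Q_{-1}=-1$, $Q_0=0$, $b_{-1}=1$ the Casoratian $b_n\bigl(P_{n+1}Q_n-P_nQ_{n+1}\bigr)$ equals $-1$ rather than $+1$; this sign does not affect any of the estimates.
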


We now consider a normalized indeterminate moment sequence $(s_n)_{n\ge 0}$.
 
The following polynomials will be used, cf. \cite[p.14]{Ak}
\begin{eqnarray*}
 A_n(z)&=& z\sum_{k=0}^{n-1} Q_k(0)Q_k(z),\\
B_n(z)&=& -1+z\sum_{k=0}^{n-1} Q_k(0)P_k(z),\\
C_n(z)&=& 1+z\sum_{k=0}^{n-1} P_k(0)Q_k(z),\\
D_n(z)&=& z\sum_{k=0}^{n-1} P_k(0)P_k(z).
\end{eqnarray*}
Because of Theorem~\ref{thm:indet} these polynomials tend to entire functions denoted $A,B,C,D$,  when $n$ tends to infinity.

Following ideas of Simon \cite{Si}, we can write the polynomial equations as
\begin{multline}
\begin{pmatrix}
A_{n+1}(z) & B_{n+1}(z)\\
C_{n+1}(z) & D_{n+1}(z)
\end{pmatrix} =\\
\biggl [ I + z
\begin{pmatrix}
-P_n(0)Q_n(0) & Q_n^2(0)\\
-P_n^2(0) & P_n(0)Q_n(0)
\end{pmatrix}
\biggr] \begin{pmatrix}
A_n(z) & B_n(z)\\
C_n(z) & D_n(z)
\end{pmatrix},
\end{multline}
hence
$$
\begin{pmatrix}
A_{n+1}(z) & B_{n+1}(z)\\
C_{n+1}(z) & D_{n+1}(z)
\end{pmatrix} =(I+zT_n)(I+zT_{n-1})\ldots(I+zT_0)
\begin{pmatrix}
 0 & -1\\
1 & 0
\end{pmatrix},
$$
where
\begin{equation}\label{eq:Tn}
T_n=\begin{pmatrix}
-P_n(0)Q_n(0) & Q_n^2(0)\\
-P_n^2(0) & P_n(0)Q_n(0)
\end{pmatrix}.
\end{equation}
Letting $n\to\infty$ we get
\begin{equation}\label{eq:Simon}
\begin{pmatrix}
A(z) & B(z)\\
C(z) & D(z)
\end{pmatrix} =\left[\prod_{n=1}^{\infty} (I+zT_n)\right]
\begin{pmatrix}
1 & 0\\
-z & 1
\end{pmatrix}
\begin{pmatrix}
 0 & -1\\
1 & 0
\end{pmatrix},
\end{equation}
where matrix products are supposed to expand to the left. The matrix on the left of \eqref{eq:Simon} is called the Nevanlinna matrix of the indeterminate moment problem.

For a non-constant entire function
\begin{equation}\label{eq:entire}
f(z)=\sum_{n=0}^\infty c_nz^n
\end{equation}
the maximum modulus is the increasing function
\begin{equation}\label{eq:mm}
\mathcal M_f(r)=\max_{|z|=r}|f(z)|,\quad r\ge 0.
\end{equation}
The order $\rho=\rho_f$ of $f$ is given  by the formulas
\begin{equation}\label{eq:deforder}
\rho_f=\limsup_{n\to\infty}\frac{\log n}{\log(\frac{1}{\root{n}\of{|c_n|}})}
=\limsup_{r\to\infty}\frac{\log\log\mathcal M_f(r)}{\log r}.
\end{equation}
If $0<\rho<\infty$ the type $\tau=\tau_f$ is given as
\begin{equation}\label{eq:deftype}
\tau_f=\frac{1}{e\rho}\limsup_{n\to\infty}(n|c_n|^{\rho/n})=\limsup_{r\to\infty}\frac{\log\mathcal M_f(r)}{r^\rho}.
\end{equation}
If $0<\tau<\infty$ we define the Phragm{\'e}n-Lindel{\"o}f indicator function
\begin{equation}\label{eq:PL}
h_f(\theta)=\limsup_{r\to\infty}\frac{\log|f(re^{i\theta})|}{r^\rho},\quad\theta\in\R.
\end{equation}
For details see \cite{BOAS}.

 For two entire functions $f,g$ it is easy to see that if 
\begin{equation}\label{eq:modineq}
\mathcal M_f(r)\le r^c\mathcal M_g(Kr)
\end{equation}
for $r$ sufficiently large, and $c,K$ are suitable constants, then $\rho_f\le\rho_g$. If in addition 
$0<\rho=\rho_f=\rho_g<\infty$, then $\tau_f\le K^\rho \tau_g$. 

Changes of finitely many of the parameters $(a_n),(b_n)$ do not change the indeterminate moment problem in an essential way as  expressed in the following classical result:

\begin{prop}\label{thm:finch} Consider an indeterminate moment problem corresponding to sequences $(a_n),(b_n)$ from \eqref{eq:3term}. If another moment problem is given in terms of $(\tilde a_n), (\tilde b_n)$, and if $\tilde a_n=a_n,\tilde b_n=b_n$ for $n\ge n_0$, then the second problem is also indeterminate and the two problems have the same order, type and 
 Phragm{\'e}n-Lindel{\"o}f indicator function.
\end{prop}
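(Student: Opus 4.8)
The plan is to show that a finite change of the Jacobi parameters turns each of the Nevanlinna functions $A,B,C,D$ into a polynomial combination (in $z$) of $A,B,C,D$, and conversely; the equalities of order, type and indicator then follow from the elementary comparison of maximum moduli recorded after \eqref{eq:modineq}, used with dilation factor $K=1$.

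The first step is that for $n\ge n_0+1$ the recurrence \eqref{eq:3term} coincides for the two problems, since the coefficients $a_n$, $b_n$ and $b_{n-1}$ all agree there. Thus for $n\ge n_0$ the four sequences $(P_n(z))$, $(Q_n(z))$, $(\tilde P_n(z))$, $(\tilde Q_n(z))$ solve one fixed second-order recurrence whose Casoratian $b_{n-1}\bigl(P_{n-1}(z)Q_n(z)-P_n(z)Q_{n-1}(z)\bigr)$ is identically $1$ (constant, by the initial conditions and $b_{-1}=1$). Hence $\{(P_n),(Q_n)\}$ is a basis of its solution space, so there is a $2\times2$ matrix $M(z)$, independent of $n$, with
\[
\begin{pmatrix}\tilde P_n(z)\\ \tilde Q_n(z)\end{pmatrix}=M(z)\begin{pmatrix}P_n(z)\\ Q_n(z)\end{pmatrix},\qquad n\ge n_0 .
\]
Solving this linear system at the anchor indices $n=n_0$ and $n=n_0+1$ (the relevant $2\times2$ determinant being the constant Casoratian) shows the entries of $M(z)$ to be polynomials in $z$; taking determinants and using that the Casoratians of $(\tilde P_n,\tilde Q_n)$ and of $(P_n,Q_n)$ both equal $1$ gives $\det M(z)\equiv 1$, so $M(z)^{-1}$ is polynomial as well and the relation is symmetric in the two problems. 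Evaluating the relation at $z=0$ gives $\sum_n\bigl(\tilde P_n^2(0)+\tilde Q_n^2(0)\bigr)<\infty$, so by Theorem~\ref{thm:indet} the second moment problem is indeterminate too.

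The second step is to insert this into the defining series of $A_n,B_n,C_n,D_n$. By those definitions, $A-A_{n_0}$, $B-B_{n_0}$, $C-C_{n_0}$, $D-D_{n_0}$ equal $z$ times $\sum_{k\ge n_0}Q_k(0)Q_k(z)$, $\sum_{k\ge n_0}Q_k(0)P_k(z)$, $\sum_{k\ge n_0}P_k(0)Q_k(z)$, $\sum_{k\ge n_0}P_k(0)P_k(z)$ respectively, with $A_{n_0},\dots,D_{n_0}$ polynomials, and likewise for the tilde quantities. Writing each $\tilde P_k(z),\tilde Q_k(z)$ (and each $\tilde P_k(0),\tilde Q_k(0)$) through $M(z)$ (resp. $M(0)$) and collecting terms, each of $\tilde A-\tilde A_{n_0},\dots,\tilde D-\tilde D_{n_0}$ becomes a combination of $A-A_{n_0},\dots,D-D_{n_0}$ with coefficients that are polynomials in $z$; hence each of $\tilde A,\tilde B,\tilde C,\tilde D$ has the form $\sum_{X\in\{A,B,C,D\}}c_X(z)X(z)+r(z)$ with $c_X,r$ polynomials, and by the symmetry above so does each of $A,B,C,D$ in terms of $\tilde A,\tilde B,\tilde C,\tilde D$.

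The last step is routine. From the form just obtained, $|\tilde A(z)|\le|z|^c\bigl(|A(z)|+|B(z)|+|C(z)|+|D(z)|+1\bigr)$ for $|z|$ large, and similarly for $\tilde B,\tilde C,\tilde D$, together with the reverse inequalities. Since $A,B,C,D$ have the common order $\rho$, type $\tau$ and Phragm{\'e}n-Lindel{\"o}f indicator $h$ of the moment problem, and a finite maximum commutes with $\limsup$, the comparison after \eqref{eq:modineq} (extended in the obvious way to a finite sum of comparison functions, and with $K=1$) shows that $\tilde A,\tilde B,\tilde C,\tilde D$ have order $\rho$, type $\tau$ and indicator $h$, which is the assertion. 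There is no deep step in this argument; the care is in the bookkeeping, the key points being that the two recurrences agree only from $n=n_0+1$ on (so $n_0$, $n_0+1$ are the right anchors), that the coefficients $c_X$ in the combination are genuine polynomials in $z$ — which is exactly what allows one to transfer the type and the indicator, and not merely the order — and that $\det M(z)\equiv1$, which makes the relation invertible and the argument symmetric.
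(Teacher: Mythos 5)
Your proposal is correct, and it is essentially the argument the paper has in mind: the paper's ``proof'' consists of reducing to the abbreviated (shifted) Jacobi matrix of Akhiezer and citing Pedersen \cite{Pe}, whose complete proof rests on exactly the ``simple linear algebra considerations about solutions to difference equations of order 2'' that you carry out explicitly via the polynomial transition matrix $M(z)$ with $\det M\equiv 1$. The only difference is presentational: you treat the general finite perturbation in one step instead of iterating the one-step shift, and you spell out the transfer of order, type and indicator through the polynomial-coefficient relation between the Nevanlinna functions, which the paper leaves to the references.
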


For a proof one can refer to results about the abbreviated Jacobi matrix, cf. \cite[p. 28]{Ak} given by the sequences $(a_{n+1}), (b_{n+1})$. The problem corresponding to $(a_{n+1}), (b_{n+1})$ is indeterminate with the same order, type and  Phragm{\'e}n-Lindel{\"o}f indicator function as the original problem. This follows from simple linear algebra considerations about solutions to difference equations of order 2. 
A complete proof  can be found in \cite{Pe}.

We end with a simple but useful result about how the order, type and indicator function change, when the Jacobi 
parameters are multiplied by a constant $c>0$.

\begin{prop}\label{thm:Jacobichange} Consider an indeterminate Hamburger moment problem of order $\rho$ and type $\tau$ corresponding to the sequences $(a_n),(b_n)$ from the three term recurrence relation. For $c>0$ the moment problem corresponding to the sequences $(ca_n),(cb_n)$ is also indeterminate with order $\rho(c)=\rho$, type $\tau(c)=\tau/c^\rho$ and indicator
$h(c)(\theta)=h(\theta)/c^\rho$.  
\end{prop}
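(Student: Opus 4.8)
The plan is to exploit the explicit dependence of the orthonormal polynomials $P_n$ and the second-kind polynomials $Q_n$ on the Jacobi parameters. Writing $P_n^{(c)}, Q_n^{(c)}$ for the polynomials associated with the scaled parameters $(ca_n),(cb_n)$, one checks directly from the three term recurrence \eqref{eq:3term} together with the initial conditions $P_{-1}=0,P_0=1$ and $Q_{-1}=-1,Q_0=0$ (recall $b_{-1}=1$, but one may use $cb_{-1}=c$ and absorb the constant) that
\begin{equation}\label{eq:scaledpoly}
P_n^{(c)}(z)=P_n(z/c),\qquad Q_n^{(c)}(z)=\tfrac{1}{c}\,Q_n(z/c),\quad n\ge 0.
\end{equation}
Indeed, substituting $z\mapsto z/c$ and $b_n\mapsto cb_n$, $a_n\mapsto ca_n$ into \eqref{eq:3term} shows that $P_n(z/c)$ satisfies the recurrence for the scaled problem with the correct initial data, and similarly for $Q_n$ up to the overall factor $1/c$ coming from the convention $b_{-1}=1$ versus $cb_{-1}=c$. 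First I would verify \eqref{eq:scaledpoly} by induction on $n$; this is the only genuinely computational point and it is routine.

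Next I would feed \eqref{eq:scaledpoly} into the condition of Theorem~\ref{thm:indet}: since $P_n^{(c)}(0)=P_n(0)$ and $Q_n^{(c)}(0)=\tfrac1c Q_n(0)$, the series $\sum\bigl((P_n^{(c)}(0))^2+(Q_n^{(c)}(0))^2\bigr)$ converges iff $\sum\bigl(P_n^2(0)+Q_n^2(0)\bigr)$ does, so the scaled problem is again indeterminate. Then I would look at the Nevanlinna matrix via \eqref{eq:Simon} (or directly via the polynomials $A_n,B_n,C_n,D_n$): from the definitions of $A_n,\dots,D_n$ and \eqref{eq:scaledpoly} one gets, for the scaled problem,
\begin{equation}\label{eq:scaledNev}
A^{(c)}(z)=\tfrac1c A(z/c),\quad B^{(c)}(z)=B(z/c),\quad C^{(c)}(z)=\tfrac{1}{c^2}C(z/c),\quad D^{(c)}(z)=\tfrac1c D(z/c),
\end{equation}
where the powers of $c$ come from counting the factors $P_k(0)$ and $Q_k(0)$ in each sum. (Equivalently, one reads this off \eqref{eq:Simon} by conjugating each $T_n$ with $\operatorname{diag}(1,c)$ and rescaling $z$.)

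Finally I would apply the elementary maximum-modulus comparison recorded after \eqref{eq:modineq}. Each of the four entire functions of the scaled Nevanlinna matrix is, up to a constant factor, the corresponding unscaled function evaluated at $z/c$; hence $\mathcal M_{f^{(c)}}(r)=\text{const}\cdot\mathcal M_f(r/c)$ for each such pair $f,f^{(c)}$. By \eqref{eq:deforder} a constant factor and the dilation $r\mapsto r/c$ leave the order unchanged, so $\rho(c)=\rho$; by \eqref{eq:deftype}, $\log\mathcal M_{f^{(c)}}(r)/r^\rho=\log\mathcal M_f(r/c)/r^\rho+o(1)=c^{-\rho}\cdot\log\mathcal M_f(r/c)/(r/c)^\rho+o(1)$, which gives $\tau(c)=\tau/c^\rho$; and \eqref{eq:PL} applied to $f(z/c)$ gives $h(c)(\theta)=h(\theta)/c^\rho$ since $re^{i\theta}\mapsto (r/c)e^{i\theta}$ only rescales the radial variable. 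There is no real obstacle here: the whole argument is the observation that scaling the Jacobi parameters by $c$ amounts to the change of variable $z\mapsto z/c$ in the Nevanlinna functions, and order, type and indicator transform in the obvious way under a dilation. The only place requiring care is keeping track of the constant powers of $c$ in \eqref{eq:scaledpoly} and \eqref{eq:scaledNev}, but these constants are irrelevant for $\rho$, $\tau$ and $h$.
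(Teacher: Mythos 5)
Your argument is correct and is essentially the paper's own proof: the whole content is the observation $P_n^{(c)}(z)=P_n(z/c)$ (and its analogue for $Q_n$), from which the Nevanlinna functions of the scaled problem are constant multiples of the original ones evaluated at $z/c$, and order, type and indicator transform in the obvious way under this dilation (the paper records only $D^{(c)}(z)=cD(z/c)$, since the four functions share order, type and indicator). Only note a harmless slip in your constants: the correct relations are $C^{(c)}(z)=C(z/c)$ and $D^{(c)}(z)=c\,D(z/c)$ rather than $\tfrac{1}{c^2}C(z/c)$ and $\tfrac{1}{c}D(z/c)$ (consistent with $AD-BC=1$), but as you yourself point out, these constant prefactors are irrelevant for $\rho$, $\tau$ and $h$.
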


\begin{proof} Let $(P_n)$ and $(P_n(\cdot;c))$ denote the orthonormal polynomials corresponding to the two sets of Jacobi parameters. Then it is easy to see that $P_n(x;c)=P_n(x/c)$. This means that the $D$-functions from the Nevanlinna matrices satisfy $D(x;c)=cD(x/c)$, and from this the relations follow. 
\end{proof}

\section{Symmetric Hamburger moment problems versus Stieltjes moment problems and birth and death processes}

In this section we have collected some well-known facts about the relation between Stieltjes moment problems and symmetric Hamburger moment problems. For details see \cite{Ch} and \cite{Be95}. 

Let 
 $$\mathbb M^\ast(\R_+)=\{\sigma\in\mathbb M^\ast(\R)\mid\supp(\sigma)\subseteq[0,\infty[\,\}\;.
 $$

A non-degenerate Stieltjes moment sequence is a sequence of the form
\begin{equation}\label{eq:St}
s_n=\int_0^\infty x^nd\sigma(x),\quad n\ge 0,
\end{equation}
where $\sigma\in\mathbb M^\ast(\R_+)$. It is called normalized if $s_0=1$.

 The sequence $(s_n)$ can be determinate or indeterminate in the
sense of Stieltjes, denoted det(S) and indet(S) respectively,
meaning that there is exactly one measure or more than one
measure from $\mathbb M^\ast(\R_+)$ satisfying \eqref{eq:St}.

In the study of a Stieltjes moment problem it is useful to consider
an accompanying symmetric Hamburger moment problem. Let $\mathbb
M_s^\ast(\R)$ denote the set of {\it symmetric} measures
$\mu\in\mathbb M^\ast(\R)$, i.e.,  $\mu(-B)=\mu(B)$ for Borel sets
$B\subseteq\R$. 

The map $\psi(x)=x^2$ induces a bijection of
$\mathbb M_s^\ast(\R)$ onto $\mathbb M^\ast(\R_+)$ given by
$\sigma=\psi(\mu)$ (the image measure of $\mu$ under $\psi$), i.e., if
$f:\R_+\to\C$ is a bounded Borel function then
 $$\int f(t)d\sigma(t)=\int f(x^2)d\mu(x).
 $$

A non-degenerate symmetric  Hamburger moment sequence is of the form
\begin{equation}\label{eq:Ha}
t_n=\int_{-\infty}^\infty x^n\,d\mu(x),\quad n\ge 0,
\end{equation}
where $\mu\in\mathbb M_s^\ast(\R)$. Clearly the odd moments vanish and $t_{2n}=s_n$ from \eqref{eq:St}, where $\sigma=\psi(\mu)$. Conversely, any non-degenerate Stieltjes moment sequence \eqref{eq:St} arises in this way from a unique symmetric Hamburger moment sequence.

The Stieltjes moment problem is indet(S)  if and only if the corresponding symmetric Hamburger problem is indeterminate. In case of indeterminacy there is a simple relation between the Nevanlinna matrices of the two problems. Let us just mention the following relation between the "$C$ "-functions $C_s$ and $C$ from the Nevalinna matrix \eqref{eq:Simon}, namely: $C_s(z)=C(z^2)$, where $C_s$ refers to the symmetric Hamburger problem
and $C$ to the Stieltjes problem. This yields the following result:

\begin{prop}\label{thm:order/type} In the indeterminate case let $\rho,\tau,h$ respectively $\rho_s,\tau_s,h_s$ denote the order, type and Phragm\'en-Lindel\"of indicator function of the Stieltjes problem respectively symmetric Hamburger problem. Then we have
$$
\rho=\rho_s/2,\; \tau=\tau_s,\, h(\theta)=h_s(\theta/2). 
$$
\end{prop}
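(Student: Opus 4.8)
The plan is to reduce everything to a single substitution $z\mapsto z^2$ performed on one of the four entire functions of the Nevanlinna matrix. By definition the order $\rho$ of an indeterminate moment problem is the common order of $A,B,C,D$, and (when $0<\rho<1$) the type $\tau$ and the indicator $h$ are the common type and indicator of these same functions; likewise $\rho_s,\tau_s,h_s$ are the common order, type and indicator of the functions $A_s,B_s,C_s,D_s$ of the symmetric Hamburger problem. Since we are handed the identity $C_s(z)=C(z^2)$ linking the ``$C$''-functions of the two problems, it is enough to compare the order, type and indicator of $C_s$ with those of $C$; both functions are non-constant in the indeterminate case, so these quantities are meaningful.

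First I would isolate the elementary fact underlying the whole proof: if $f$ is a non-constant entire function and $g(z):=f(z^2)$, then
$$
\mathcal M_g(r)=\max_{|z|=r}|f(z^2)|=\max_{|w|=r^2}|f(w)|=\mathcal M_f(r^2),\qquad r\ge 0 .
$$
Putting $s=r^2$, which is an increasing bijection of $[0,\infty[$ onto itself, into the formulas \eqref{eq:deforder}, \eqref{eq:deftype} and \eqref{eq:PL} and using $\log r=\tfrac12\log s$ and $r^{2\rho_f}=s^{\rho_f}$, one reads off directly
$$
\rho_g=2\rho_f,\qquad \tau_g=\tau_f\ (\text{when }0<\rho_f<\infty),\qquad h_g(\theta)=h_f(2\theta).
$$

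Applying this with $f=C$ and $g=C_s$ gives $\rho_s=2\rho$, i.e.\ $\rho=\rho_s/2$. When $0<\rho_s<1$ (equivalently $0<\rho<1/2$, so in particular $0<\rho_C<\infty$) the same computation yields $\tau_s=\tau_{C_s}=\tau_C=\tau$ and $h_s(\theta)=h_{C_s}(\theta)=h_C(2\theta)=h(2\theta)$, whence $h(\theta)=h_s(\theta/2)$, as claimed. The boundary values cause no trouble: if $\rho_s=0$ then $\rho=0$ and no type is attached on either side, while if $\rho_s=1$ then $\rho=1/2\in(0,1)$ and $\tau_s=0$ by M.~Riesz's theorem, and the relation $\tau_g=\tau_f$ still forces $\tau=0=\tau_s$.

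There is essentially no genuine obstacle: the argument is a change of variables together with the bookkeeping that the order, type and indicator of a moment problem are the values common to the four Nevanlinna functions. The one non-trivial ingredient is the identity $C_s(z)=C(z^2)$, which is classical and was recalled just before the statement; a self-contained derivation would follow from the explicit series for $A_n,B_n,C_n,D_n$ given in the preliminary sections, combined with the parity properties of the orthonormal polynomials of a symmetric problem (so that $C_s$ is an even function) and the identification $t_{2n}=s_n$ of the moment sequences under $\sigma=\psi(\mu)$.
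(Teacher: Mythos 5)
Your proposal is correct and follows essentially the same route as the paper: the paper's (very brief) argument is precisely to invoke the identity $C_s(z)=C(z^2)$ together with the fact that order, type and indicator of an indeterminate problem are the common values for the Nevanlinna functions, and your change-of-variables computation $\mathcal M_{C_s}(r)=\mathcal M_C(r^2)$ in \eqref{eq:deforder}, \eqref{eq:deftype}, \eqref{eq:PL} just makes explicit what the paper leaves to the reader. Your handling of the boundary cases $\rho_s=0$ and $\rho_s=1$ is a harmless (and welcome) addition.
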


Let us recall the connection between the three term recurrence relation (in the notation of \cite{Ak})
\begin{equation}\label{eq:3t}
z P_n(z)=b_nP_{n+1}(z)+a_nP_n(z)+b_{n-1}P_{n-1}(z),\quad n\ge 0,
\end{equation}
for a Stieltjes moment problem, where $a_n,b_n>0$ for $n\ge 0$,
and the three term recurrence relation
\begin{equation}\label{eq:sym3t} 
zS_n(z)=\b_nS_{n+1}(z)+\b_{n-1}S_{n-1}(z)
\end{equation}
for the corresponding symmetric Hamburger moment problem.
We have
\begin{equation}\label{eq:consym}
a_n=\b_{2n}^2+\b_{2n-1}^2,n\ge 1,\;a_0=\b_0^2,\quad
b_n=\b_{2n}\b_{2n+1},n\ge 0.
\end{equation}
 From this equation it is easy to calculate $(a_n),(b_n)$ from $(\b_n)$. Conversely, if $(a_n),(b_n)$ are given, then $(\b_n)$ is uniquely determined by the same equation.

 If we compare \eqref{eq:consym}  and \eqref{eq:bdtoS}, we see that any Stieltjes moment problem comes from a birth and death process with rates
$$
\lambda_n=\b_{2n}^2,\; \mu_{n+1}=\b_{2n+1}^2,\;n\ge 0,\quad \mu_0=0.
$$
\medskip

From this we get
\begin{prop}\label{thm:sumup}
The symmetric Hamburger moment problem corresponding to the Stieltjes problem associated with the birth and death rates \eqref{eq:birth} and \eqref{eq:death} has  the recurrence coefficients
\begin{equation}\label{eq:bn}
 b_{2n}=\sqrt{(pn+e_1)\ldots(pn+e_p)},\quad b_{2n+1}=\sqrt{(p(n+1)+d_1)\ldots(p(n+1)+d_p)}.
\end{equation}
\end{prop}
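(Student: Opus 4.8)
The plan is to combine the two correspondences already recorded in this section. By \eqref{eq:bdtoS}, the Stieltjes moment problem attached to the rates $(\lambda_n),(\mu_n)$ has Jacobi parameters $a_n=\lambda_n+\mu_n$ and $b_n=\sqrt{\lambda_n\mu_{n+1}}$ for $n\ge 0$, with $\mu_0=0$. For the accompanying symmetric Hamburger problem the recurrence is \eqref{eq:sym3t} with coefficients $(\b_n)$ (there is no $a$-sequence, the problem being symmetric, and in the statement these coefficients are simply denoted $b_n$), and \eqref{eq:consym} asserts that
$$
a_0=\b_0^2,\qquad a_n=\b_{2n}^2+\b_{2n-1}^2\ (n\ge 1),\qquad b_n=\b_{2n}\b_{2n+1}\ (n\ge 0),
$$
a system which determines $(\b_n)$ uniquely from $(a_n),(b_n)$.

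First I would verify that the candidate $\b_{2n}=\sqrt{\lambda_n}$ and $\b_{2n+1}=\sqrt{\mu_{n+1}}$ for $n\ge 0$ solves this system: $\b_0^2=\lambda_0=a_0$ because $\mu_0=0$; for $n\ge 1$ one has $\b_{2n}^2+\b_{2n-1}^2=\lambda_n+\mu_n=a_n$ (using $\b_{2n-1}=\b_{2(n-1)+1}=\sqrt{\mu_n}$); and $\b_{2n}\b_{2n+1}=\sqrt{\lambda_n}\sqrt{\mu_{n+1}}=b_n$ for $n\ge 0$. By the uniqueness just mentioned, these are exactly the recurrence coefficients of the symmetric Hamburger problem — equivalently, this is the identification $\lambda_n=\b_{2n}^2$, $\mu_{n+1}=\b_{2n+1}^2$ noted just above the statement.

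Finally I would substitute the explicit polynomial rates \eqref{eq:birth} and \eqref{eq:death}, namely $\lambda_n=(pn+e_1)\ldots(pn+e_p)$ and $\mu_{n+1}=(p(n+1)+d_1)\ldots(p(n+1)+d_p)$, into $\b_{2n}=\sqrt{\lambda_n}$ and $\b_{2n+1}=\sqrt{\mu_{n+1}}$, which gives \eqref{eq:bn}. There is no real obstacle here; the only points deserving a word are that $\mu_0=0$ (guaranteed by $d_1\ldots d_p=0$ in \eqref{eq:natcond}), which is what makes the $a_0$-equation consistent, and the even/odd index bookkeeping in \eqref{eq:consym}.
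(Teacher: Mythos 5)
Your proof is correct and follows essentially the same route as the paper, which simply compares \eqref{eq:bdtoS} with \eqref{eq:consym} to read off $\lambda_n=b_{2n}^2$, $\mu_{n+1}=b_{2n+1}^2$ and then substitutes the polynomial rates. Your explicit verification of the three equations and the appeal to uniqueness of $(\b_n)$ is just a slightly more detailed write-up of the same observation.
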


\section{Some technical Lemmas}

We are going to analyse the symmetric moment problem of Proposition~\ref{thm:sumup} and for this we need some lemmas. By $\a_n\sim\b_n$ is meant that $\a_n/\b_n\to 1$ for $n\to\infty$.

\begin{lem}\label{thm:deltan} For $x>-1$ and $n\in\mathbb N$ define
\begin{equation}\label{eq:deltan}
\delta_n(x)=n\left(\frac{(x+1)\ldots(x+n)\Gamma(x+1)}{n!n^x}-1\right).
\end{equation}
Then we have
\begin{equation}\label{eq:prod}
\prod_{k=1}^n \left(1+\frac{x}{k}\right)=\frac{n^x}{\Gamma(x+1)}\left(1+\frac{\delta_n(x)}{n}\right)
\end{equation}
and the following estimates hold for all $n\in\N$:
\begin{enumerate}
\item[\rm{(i)}]  $|\delta_n(x)|\le 1$ for $-1<x\le 1$
\item[\rm{(ii)}] For any $N\in\N$ there exists a constant $C_N>0$ such that
$$ 
0\le \delta_n(x)\le C_N\;\mbox{for}\; 0\le x\le N.
$$
\end{enumerate}
In particular for $x>-1$
\begin{equation}\label{eq:sim}
\prod_{k=1}^n\left(1+\frac{x}{k}\right)\sim \frac{n^x}{\Gamma(x+1)},\quad n\to\infty.
\end{equation}
\end{lem}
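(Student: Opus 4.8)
The plan is to prove Lemma~\ref{thm:deltan} by first establishing the identity \eqref{eq:prod}, which is essentially a tautology: the finite product $\prod_{k=1}^n(1+x/k)$ equals $(x+1)\ldots(x+n)/n!$, so writing $\delta_n(x)$ as in \eqref{eq:deltan} is just algebra — multiply and divide by $n^x/\Gamma(x+1)$ and read off \eqref{eq:prod}. The asymptotic \eqref{eq:sim} then follows immediately from \eqref{eq:prod} once we know $\delta_n(x)/n\to 0$, which in turn follows from either of the boundedness estimates (i), (ii). So the whole content of the lemma is the two estimates on $\delta_n(x)$, and the classical fact that $(x+1)\ldots(x+n)\Gamma(x+1)/(n!n^x)\to 1$ (a standard consequence of the Gauss/Weierstrass product for $\Gamma$, or of Stirling); I would cite or quickly recall this so that at least $\delta_n(x)=o(n)$ is available, and then sharpen it.

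For the quantitative bounds I would take logarithms. Write $\log\big(\prod_{k=1}^n(1+x/k)\big)-x\log n = \sum_{k=1}^n\big(\log(1+x/k)-x/k\big) + x\big(\sum_{k=1}^n 1/k-\log n\big)$. The first sum converges (terms are $O(1/k^2)$) and the second tends to $x\gamma_{\mathrm E}$, consistent with the limit $-\log\Gamma(x+1)=x\gamma_{\mathrm E}+\sum(x/k-\log(1+x/k))$. Thus $\log\big[(x+1)\ldots(x+n)\Gamma(x+1)/(n!n^x)\big]$ equals a tail $-\sum_{k>n}\big(x/k-\log(1+x/k)\big)$ plus $x(\sum_{k\le n}1/k-\log n-\gamma_{\mathrm E})$. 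Both pieces are $O(1/n)$ with explicit constants: the first tail is between $0$ and $\sum_{k>n}x^2/(2k^2)\le x^2/(2n)$ when $x>0$ (and one controls the sign and size similarly for $-1<x\le 0$, where $x/k-\log(1+x/k)$ is still $\ge 0$), and $\sum_{k\le n}1/k-\log n-\gamma_{\mathrm E}$ lies in $[-1/n,0]$ by the standard integral comparison. Exponentiating, $(x+1)\ldots(x+n)\Gamma(x+1)/(n!n^x)=1+O(1/n)$ with a constant depending only on a bound for $|x|$, which gives $|\delta_n(x)|\le C(x)$; tracking the constants on $-1<x\le 1$ should yield the clean bound $1$ in (i), and on $0\le x\le N$ the nonnegativity in (ii) comes from the fact that on that range the product $\prod(1+x/k)$ overshoots $n^x/\Gamma(x+1)$ (equivalently, the tail term dominates the harmonic correction), giving $\delta_n(x)\ge 0$.

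The sign claim in (i)--(ii) — that $\delta_n(x)\ge 0$ for $x\ge 0$ and that $|\delta_n(x)|\le 1$ for $-1<x\le 1$ uniformly in $n$ — is the part needing a little care rather than the mere $O(1/n)$ rate. For $0\le x\le 1$ I would argue monotonicity in $n$: show $\delta_{n}(x)$ is decreasing (or the ratio $(x+1)\ldots(x+n)\Gamma(x+1)/(n!n^x)$ is decreasing) in $n$ for fixed $x$ in this range by comparing consecutive terms, $\frac{a_{n+1}}{a_n}=\frac{x+n+1}{n+1}\cdot\frac{n^x}{(n+1)^x}=(1+\tfrac{x}{n+1})(1+\tfrac1n)^{-x}$, and checking this is $\ge 1$ (so $a_n$ increasing to $1$, hence $a_n\le 1$, hence $\delta_n\ge 0$... one must get the direction right) using $(1+t)^{-x}\ge 1-xt$ type inequalities; combined with $a_1=(x+1)\Gamma(x+1)=\Gamma(x+2)$ and the limit $1$, this pins $\delta_n$ into $[0,1\cdot n\cdot|a_1-1|]$-type bounds, and one verifies $n(1-a_n)\le 1$. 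For $-1<x<0$ a symmetric comparison applies with reversed signs. The rest — deducing \eqref{eq:sim} — is then one line. I expect the monotonicity/sign bookkeeping for the uniform constant $1$ in (i) to be the only genuinely fiddly obstacle; everything else is the classical $\Gamma$-asymptotics repackaged.
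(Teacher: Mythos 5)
Your route (Weierstrass product for $\Gamma$, splitting $\log a_n$, where $a_n=(x+1)\cdots(x+n)\Gamma(x+1)/(n!\,n^x)$, into a tail plus a harmonic-number correction) is genuinely different from the paper's, which deduces everything from the Bohr--Mollerup log-convexity inequality $\frac{n^x n!}{x(x+1)\cdots(x+n)}\le\Gamma(x)\le \frac{n^x n!}{x(x+1)\cdots(x+n)}\cdot\frac{x+n}{n}$ for $0<x\le1$: that gives at once $1\le a_n\le 1+x/n$, i.e.\ $0\le\delta_n(x)\le x\le 1$, then the case $-1<x\le0$ by replacing $x$ with $x+1$, and (ii) by induction on $N$. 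Your identity \eqref{eq:prod} and the deduction of \eqref{eq:sim} are fine, and your log decomposition, once the signs are fixed, does yield (ii): the correct identity is $\log a_n=\sum_{k>n}\bigl(x/k-\log(1+x/k)\bigr)+x\bigl(H_n-\log n-\gamma_{\mathrm E}\bigr)$, in which the tail enters with a plus sign and $H_n-\log n-\gamma_{\mathrm E}\in(0,1/(2n))$, not $[-1/n,0]$. Hence for $x\ge0$ both terms are nonnegative (no ``domination'' argument is needed, and with your stated signs the nonnegativity claim would not come out), and $\log a_n\le (x^2+x)/(2n)$ gives $0\le \delta_n(x)\le n(e^{c_N/n}-1)\le c_Ne^{c_N}$ with $c_N=(N^2+N)/2$, a legitimate $C_N$.

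The genuine gap is part (i). Exponentiating an $O(1/n)$ bound on $\log a_n$ cannot produce the constant $1$: already $\delta_n(1)=1$ for every $n$ (since $a_n=(n+1)/n$ at $x=1$), so any estimate of the form $\delta_n\le n(e^{c/n}-1)$ with $c\ge1$ overshoots, and ``tracking the constants'' does not repair this. Your fallback, monotonicity of $a_n$ in $n$ via the ratio $a_{n+1}/a_n$, only gives the sign $\delta_n(x)\ge0$ on $[0,1]$ (and $\le0$ on $(-1,0)$); it says nothing about the rate of convergence, and the proposed interval ``$[0,\,n|a_1-1|]$'' grows with $n$, so it is vacuous as a uniform bound. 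The inequality $n(a_n-1)\le1$, which you leave as ``one verifies'', is exactly the content of (i); to obtain it you need a quantitative bound such as $a_n\le 1+x/n$ (the paper's log-convexity inequality), or monotonicity of $\delta_n(x)$ itself in $n$ combined with $\delta_n(x)\to x(x+1)/2\le 1$ on $[0,1]$ --- a different and more delicate computation than the one you set up. Likewise ``a symmetric comparison with reversed signs'' for $-1<x<0$ is not automatic; the clean way is the paper's shift $x\mapsto x+1$ in the already-proved inequality. So (i) as written is unproved, while the identity, \eqref{eq:sim}, and (ii) can be salvaged after the sign corrections.
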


\begin{proof} For $0<x\le 1,n\in\N$ we have the following inequalities
$$
\frac{n^x n!}{x(x+1)\ldots(x+n)}\le \Gamma(x)\le\frac{n^x n!}{x(x+1)\ldots(x+n)}\frac{x+n}{n},
$$
as a consequence of the log-convexity of $\Gamma(x)$. (See
the proof of the Bohr-Mollerup characterization of the Gamma function in \cite[p. 14]{Ar}.) From this inequality we immediately get
\begin{equation}\label{eq:deltan1}
1\le \frac{(x+1)\ldots(x+n)\Gamma(x+1)}{n^x n!}\le 1+\frac{x}{n},
\end{equation}
hence $0\le \delta_n(x)\le x\le 1$, which trivially holds for $x=0$ also.

For $-1<x\le 0$ we apply \eqref{eq:deltan1} to $x+1$ and get
$$
1\le \frac{(x+1)\ldots(x+n)\Gamma(x+1)}{n^x n!}\frac{x+n+1}{n}\le 1+\frac{x+1}{n}.
$$
After multiplication by $n/(n+x+1)$, subtraction of 1 and multiplication by $n$ we get
$$
0\ge \delta_n(x)\ge -\frac{n(x+1)}{n+x+1}\ge -(x+1)\ge -1.
$$
This shows $\rm{(i)}$.

Suppose that $\rm{(ii)}$ holds for some $N\in\N$ and let $N\le x\le N+1$. Applying $\rm{(ii)}$ to
$x-1$ we get
$$
0\le n\left(\frac{(x+1)\ldots(x+n)\Gamma(x+1)}{n^x n!}\frac{n}{x+n}-1\right)\le C_N,
$$
and then
$$
0\le \delta_n(x)-x\le \frac{x+n}{n}C_N,
$$
showing that $0\le \delta_n(x)\le C_{N+1}:=N+1+(N+2)C_N.$
\end{proof}

\begin{rem}\label{thm:extension} {\rm The result of Lemma~\ref{thm:deltan} can be improved, since it can be shown that $\delta_n(x)\to x(x+1)/2$ for $n\to\infty$. More precisely we have
$$
\delta_n(x)=\frac{x(x+1)}{2} + O_x(n^{-1}),
$$
where $nO_x(n^{-1})$ is bounded for $x$ in a compact subset of $]-1,\infty[$ as $n\to\infty$.
}
\end{rem}

The three term recurrence relation
$$
zr_n(z)=b_nr_{n+1}(z)+b_{n-1}r_{n-1}(z)
$$
for a symmetric moment problem 
is satisfied by  $r_n(z)=P_n(z)$ and $r_n(z)=Q_n(z)$, and putting $z=0$ and noting that
$P_{2n+1}(0)=Q_{2n}(0)=0$ we find
\begin{equation}\label{eq:P2n}
P_{2n}(0)=(-1)^n\frac{b_0b_2\ldots b_{2n-2}}{b_1b_3\ldots b_{2n-1}},\quad
Q_{2n+1}(0)=(-1)^n\frac{b_1b_3\ldots b_{2n-1}}{b_0b_2\ldots b_{2n}},\quad n\ge 0,
\end{equation} 
where empty products are defined as 1.

When $(b_n)$ is given by \eqref{eq:bn} we find

\begin{equation}\label{eq:bninc}
\frac{b_{2k-1}^2}{b_{2k}^2}=\prod_{j=1}^p\frac{1+\frac{d_j}{pk}}{1+\frac{e_j}{pk}}.
\end{equation}
This gives
$$
Q_{2n+1}^2(0)=\frac{1}{b_0^2}\prod_{k=1}^n\frac{b_{2k-1}^2}{b_{2k}^2}=
\frac{1}{e_1\ldots e_p}\prod_{k=1}^n
\frac{(1+\frac{d_1}{pk})\ldots(1+\frac{d_p}{pk})}{(1+\frac{e_1}{pk})\ldots(1+\frac{e_p}{pk})}
$$
and
$$
P_{2n}^2(0)=\prod_{k=1}^n\frac{b_{2k-2}^2}{b_{2k-1}^2}=\prod_{k=1}^n
\frac{(1+\frac{e_1-p}{pk})\ldots(1+\frac{e_p-p}{pk})}{(1+\frac{d_1}{pk})\ldots(1+\frac{d_p}{pk})}.
$$
By Lemma~\ref{thm:deltan} and \eqref{eq:DE}  we find for $n\in\N$
\begin{equation}\label{eq:Q}
Q_{2n+1}^2(0)=c_2n^{-(E-D)/p}\prod_{j=1}^p\frac{1+\delta_n(d_j/p)/n}{1+\delta_n(e_j/p)/n},\quad
c_2:=p^{-p}\prod_{j=1}^p\frac{\Gamma(e_j/p)}{\Gamma(1+d_j/p)}
\end{equation}
and
\begin{equation}\label{eq:P}
P_{2n}^2(0)=c_1n^{-[p-(E-D)/p]}\prod_{j=1}^p\frac{1+\delta_n(e_j/p-1)/n}{1+\delta_n(d_j/p)/n},\quad
c_1:=\prod_{j=1}^p\frac{\Gamma(1+d_j/p)}{\Gamma(e_j/p)}.
\end{equation}

Note that 
\begin{equation}\label{eq:constant}
c_1c_2=p^{-p},
\end{equation}
and 
\begin{equation}\label{eq:asymp}
P_{2n}^2(0)\sim c_1n^{-[p-(E-D)/p]},\quad Q_{2n+1}^2(0)\sim c_2n^{-(E-D)/p}.
\end{equation}

We need more precise information about the asymptotic behaviour in \eqref{eq:asymp} and claim the following:

\begin{lem}\label{thm:precas} Given the rates \eqref{eq:birth} and \eqref{eq:death} there exists
a constant $K>0$  and numbers $\tau_n, \rho_n$ such that
\begin{equation}\label{eq:asymp*}
P_{2n}^2(0)= c_1n^{-[p-(E-D)/p]}(1+\tau_n),\quad Q_{2n+1}^2(0)= c_2n^{-(E-D)/p}(1+\rho_n)
\end{equation}
and
\begin{equation}\label{eq:tr} 
|\tau_n|,|\rho_n|\le K/n,\quad n\ge 1.
\end{equation}
\end{lem}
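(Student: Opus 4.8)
The plan is to derive the sharp error estimates \eqref{eq:asymp*}--\eqref{eq:tr} directly from the product formulas \eqref{eq:Q} and \eqref{eq:P} by controlling the finite products
$$
\prod_{j=1}^p\frac{1+\delta_n(x_j)/n}{1+\delta_n(y_j)/n}
$$
for suitable shifts $x_j,y_j\in\{d_j/p,\,e_j/p,\,e_j/p-1\}$, all lying in a fixed compact subset of $]-1,\infty[$. The quantity $\tau_n$ (resp. $\rho_n$) is by definition this product minus $1$, so it suffices to show that each such product equals $1+O(1/n)$ with a constant that is uniform in $n$. First I would record, from Lemma~\ref{thm:deltan}(i)--(ii), that there is a single constant $M>0$ with $|\delta_n(x_j)|,|\delta_n(y_j)|\le M$ for all $n\ge 1$; this is immediate since each shift is either in $]-1,1]$ (case (i)) or in $[0,N]$ for $N=\max_j\lceil e_j/p\rceil$ (case (ii)).

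The second step is the elementary estimate that if $|s|,|t|\le M$ then, for $n\ge 2M$,
$$
\left|\frac{1+s/n}{1+t/n}-1\right|=\frac{|s-t|}{|n+t|}\le \frac{2M}{n-M}\le \frac{4M}{n},
$$
and more generally a product of $p$ such factors differs from $1$ by at most $C/n$ for $n$ large, where $C$ depends only on $p$ and $M$; this follows by the standard telescoping/induction argument $|\prod(1+\eps_j)-1|\le \prod(1+|\eps_j|)-1\le e^{\sum|\eps_j|}-1$ once each $|\eps_j|\le 4M/n$. Applying this to \eqref{eq:Q} and \eqref{eq:P} with the appropriate matching of numerator and denominator factors yields $|\rho_n|\le C/n$ and $|\tau_n|\le C/n$ for all $n\ge n_0$ for some threshold $n_0$; enlarging $K$ to absorb the finitely many terms $n<n_0$ (each $\tau_n,\rho_n$ is a fixed finite number, so $|\tau_n|,|\rho_n|\le n|\tau_n|\le K/n$ after increasing $K$) gives \eqref{eq:tr} for all $n\ge 1$.

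There is essentially no serious obstacle here: the result is a routine consequence of Lemma~\ref{thm:deltan}, and the only mild care needed is (a) making sure the shifts $e_j/p-1$ that appear in \eqref{eq:P} are $>-1$, which holds because $e_j>0$ forces $e_j/p-1>-1$, and (b) checking the denominators $1+\delta_n(y_j)/n$ stay bounded away from $0$, which holds for $n>M$ since $\delta_n\ge -1$ in case (i) and $\delta_n\ge 0$ in case (ii), so $1+\delta_n(y_j)/n\ge 1-1/n>0$. One could also invoke Remark~\ref{thm:extension}, which gives the refined expansion $\delta_n(x)=x(x+1)/2+O_x(1/n)$ uniformly on compacts; then $1+\delta_n(x)/n=1+x(x+1)/(2n)+O(1/n^2)$ and the products in \eqref{eq:Q}, \eqref{eq:P} expand as $1+O(1/n)$ directly, which is perhaps the cleanest route. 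Either way, the statement follows, and it is worth noting for later use that the implied constant $K$ depends only on $p$ and on $\max_j e_j$, not on the fine structure of the parameters.
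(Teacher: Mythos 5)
Your proposal is correct and follows essentially the same route as the paper: define $\tau_n,\rho_n$ as the respective products of ratios $\bigl(1+\delta_n(\cdot)/n\bigr)$ minus $1$, invoke Lemma~\ref{thm:deltan} for a uniform bound on the $\delta_n$'s, and then conclude $|\tau_n|,|\rho_n|\le K/n$ by elementary manipulation (the paper expands the difference of the two products as $A_n/B_n$, you estimate factor by factor, which is an inessential difference). The only nitpick is that your uniform bound via case (ii) must also cover the shifts $d_j/p$, which may exceed $1$, so one should take $N\ge\max_j d_j/p$ as well (and accordingly the constant $K$ depends on $\max_j d_j$ too); this is trivially repaired and does not affect the argument.
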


\begin{proof} Define
 $$
\tau_n=\prod_{j=1}^p\frac{1+\delta_n(e_j/p - 1)/n}{1+\delta_n(d_j/p)/n}-1,\quad 
\rho_n=\prod_{j=1}^p\frac{1+\delta_n(d_j/p)/n}{1+\delta_n(e_j/p)/n}-1.
$$
Then \eqref{eq:asymp*} holds. To see that \eqref{eq:tr} holds we write
$$
x_{n,j}=\delta_n(e_j/p - 1),\; y_{n,j}=\delta_n(d_j/p),\quad j=1,\ldots,p
$$
and by Lemma~\ref{thm:deltan} the quantities $x_{n,j},y_{n,j}$ are bounded independent of $n$.
Putting
$$
A_n=n\left(\prod_{j=1}^p (1+x_{n,j}/n) -\prod_{j=1}^p (1+y_{n,j}/n)\right),\quad B_n=\prod_{j=1}^p (1+y_{n,j}/n),
$$
we have $n\tau_n=A_n/B_n$ and
\begin{eqnarray*}
\begin{split}
A_n &=\sum_{j=1}^p (x_{n,j}-y_{n,j})+\frac{1}{n}\sum_{j_1<j_2}(x_{n,j_1}
x_{n,j_2}-y_{n,j_1}y_{n,j_2})\\
&+\ldots + \frac{1}{n^{p-1}}(x_{n,1}\ldots x_{n,p}-y_{n,1}\ldots y_{n,p}).
\end{split}
\end{eqnarray*}
Since $A_n$ is bounded and $B_n\to 1$, we see that 
 $|\tau_n| \le K/n$ for a suitable constant $K>0$ independent of $n$. A similar argument applies to $\rho_n$.
\end{proof}

A moment problem is indeterminate if and only if $(P_n(0)),(Q_n(0))\in\ell^2$, cf. Theorem~\ref{thm:indet}, and by \eqref{eq:asymp} this is equivalent to 
$$
1<(E-D)/p<p-1,
$$
which proves Valent's result in \cite{Va98}: The Stieltjes problem with rates \eqref{eq:birth},\eqref{eq:death}
is indeterminate precisely when  \eqref{eq:valindet} holds.

\section{Results about order and type for symmetric moment problems}

We consider a symmetric  indeterminate Hamburger moment problem given by the three term recurrence relation
$$
zr_n(z)=b_nr_{n+1}(z) + b_{n-1}r_{n-1}(z),
$$ 
for a sequence of positive numbers $b_n>0$. As usual $(P_n)$ and $(Q_n)$ denote the orthonormal polynomials  and those of the second kind.
By the symmetry assumption $P_{2n+1}(0)=Q_{2n}(0)=0$. Therefore 
\eqref{eq:Simon} can be rewritten as
\begin{equation}\label{eq:Simon1}
\begin{pmatrix}
A(z) & B(z)\\
C(z) & D(z)
\end{pmatrix} =\left[\prod_{n=1}^\infty (I+zT_{2n})(I+zT_{2n-1})\right]
\begin{pmatrix}
 0 & -1\\
1 & z
\end{pmatrix}.
\end{equation}
For $n\ge 1$ we introduce
\begin{equation}\label{eq:uv}
v_n=P_{2n}^2(0),\;u_n=Q_{2n-1}^2(0)
\end{equation}
and
\begin{equation}\label{eq:UV}
V_n=\begin{pmatrix}
0 & 0\\ v_n & 0
\end{pmatrix}=-T_{2n},\quad
U_n=\begin{pmatrix}
0 & u_n\\ 0 & 0
\end{pmatrix}=T_{2n-1}.
\end{equation}
We then have

\begin{equation}\label{eq:N-matrix}
\begin{pmatrix}
A(z) & B(z)\\
C(z) & D(z)
\end{pmatrix} =\left[\prod_{n=1}^\infty (I-zV_n)(I+zU_n)\right]
\begin{pmatrix}
 0 & -1\\
1 & z
\end{pmatrix}.
\end{equation}

We observe that $U_mU_n=V_mV_n=0$ and
\begin{equation}\label{eq:UV1}
U_nV_m=\begin{pmatrix} u_nv_m & 0\\ 0 & 0\end{pmatrix},\quad V_nU_m=\begin{pmatrix}
0 & 0\\ 0 & v_nu_m\end{pmatrix}.
\end{equation}
We want to analyse the infinite product and its power series
$$
M(z)=\begin{pmatrix} m_{11}(z) & m_{12}(z)\\m_{21}(z) &m_{22}(z)\end{pmatrix}:=\prod_{n=1}^\infty (I-zV_n)(I+zU_n)=I + \sum_{n=1}^\infty M_nz^n.
$$
Clearly
$$
M_1=\begin{pmatrix} 0 & \sum_{k=1}^\infty u_k\\ -\sum_{k=1}^\infty v_k &0\end{pmatrix}
$$
and for $n\ge 1$
\begin{eqnarray*}
M_{2n}&=&(-1)^n\sum_{1\le k_1\le k_2<\ldots<k_{2n-1}\le k_{2n}} V_{k_{2n}} U_{k_{2n-1}}\cdots V_{k_2}U_{k_1}\\
&+&
(-1)^n\sum_{1\le k_1<k_2\le \ldots\le k_{2n-1}<k_{2n} } U_{k_{2n}}V_{k_{2n-1}}\cdots U_{k_2}V_{k_1},
\end{eqnarray*}
because we shall consider all possible choices of $2n$ parentheses, where we select the term containing $z$. Since the product of two consecutive  $U's$ or $V's$ gives zero, we have to alternate between $U's$ and $V's$, and this also determines the inequalities between the indices.
From \eqref{eq:UV1} we see that $M_{2n}$ is a diagonal matrix.
 There is a similar expression for $M_{2n+1}$  showing that the diagonal elements vanish.

Let us focus on the lower right corner of $M_{2n}$. This matrix entry  is equal to
\begin{equation}\label{eq:a2n}
a_{2n}=(-1)^n\sum_{1\le k_1\le k_2<\ldots<k_{2n-1}\le k_{2n}}u_{k_1}v_{k_2}\cdots u_{k_{2n-1}}v_{k_{2n}},
\end{equation}
and by \eqref{eq:N-matrix} we have
$$
M(z)=\begin{pmatrix}
A(z)z-B(z) & A(z)\\C(z)z-D(z) & C(z)
\end{pmatrix}
$$
hence
$$
m_{22}(z)=1+\sum_{n=1}^\infty a_{2n}z^{2n}=C(z).
$$

Summing up we have proved:

\begin{prop}\label{thm:status} Consider a symmetric indeterminate Hamburger moment problem
and let  $v_n=P_{2n}^2(0), u_n=Q_{2n-1}^2(0)$. The order, type and Phragm{\'e}n-Lindel{\"o}f indicator of the moment problem  is equal to the order, type and Phragm{\'e}n-Lindel{\"o}f indicator of the function
\begin{equation}\label{eq:m22}
m_{22}(z)=1+\sum_{n=1}^\infty a_{2n}z^{2n},
\end{equation}
where $a_{2n}$ is given in \eqref{eq:a2n}.
\end{prop}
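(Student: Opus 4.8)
The plan is to read the proposition straight off the matrix computation carried out in the paragraphs preceding it, combined with the basic fact recalled in the introduction (following \cite{B:P}): the order, type and Phragm\'en--Lindel\"of indicator of an indeterminate moment problem are \emph{by definition} the common order, type and indicator of the four entries $A,B,C,D$ of the Nevanlinna matrix. Hence it suffices to exhibit the power series $m_{22}(z)=1+\sum_{n\ge1}a_{2n}z^{2n}$ as one of those four entire functions.

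First I would pin down the matrix identity. Starting from the Nevanlinna matrix in the symmetric form \eqref{eq:Simon1} and the substitutions $V_n=-T_{2n}$, $U_n=T_{2n-1}$ of \eqref{eq:UV}, the infinite product $M(z)=\prod_{n\ge1}(I-zV_n)(I+zU_n)$ is exactly the one appearing there, and it converges locally uniformly on $\C$ because $\sum(u_n+v_n)<\infty$ in the indeterminate case (Theorem~\ref{thm:indet} together with $P_{2n+1}(0)=Q_{2n}(0)=0$). Multiplying the trailing factor $\begin{pmatrix}0&-1\\1&z\end{pmatrix}$ into $M(z)$ and comparing with \eqref{eq:N-matrix} identifies the entries of $M(z)$ with $A(z)z-B(z)$, $A(z)$, $C(z)z-D(z)$, $C(z)$; in particular $m_{22}(z)=C(z)$.

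Second I would expand $M(z)$ as a power series. The coefficient matrix of $z^n$ is the sum, over all ways of choosing $n$ of the parenthesised factors from which we keep the "$z$-term" ($-V_k$ or $U_k$) and the identity from the rest, of the corresponding ordered matrix products. Because $U_aU_b=V_aV_b=0$ by \eqref{eq:UV1}, only the summands whose chosen factors strictly alternate between a $U$ and a $V$ survive, and the prescribed order of the factors in \eqref{eq:Simon1} then forces precisely the alternating chain of inequalities on the indices (the $\le$/$<$ pattern noted in the footnote to \eqref{eq:mz}). Using \eqref{eq:UV1} once more, a product of an even number of such alternating factors is a diagonal matrix, so $M_{2n}$ is diagonal and $M_{2n+1}$ has vanishing diagonal; reading off the lower-right entry of $M_{2n}$ yields exactly $a_{2n}$ as in \eqref{eq:a2n}. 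Therefore $C(z)=m_{22}(z)=1+\sum_{n\ge1}a_{2n}z^{2n}$, and since $C$ is one of the four Nevanlinna functions the proposition follows.

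I do not expect any serious difficulty here; the one step that needs care is the combinatorial bookkeeping in the product expansion --- tracking which matrices are lower-right and which upper-left, checking that the "$U$-headed" family $U_{k_{2n}}V_{k_{2n-1}}\cdots U_{k_2}V_{k_1}$ contributes only to $m_{11}$ and so does not enter $m_{22}$, and verifying that the surviving index constraints are exactly $k_1\le k_2<k_3\le k_4<\cdots\le k_{2n}$ rather than some slightly different pattern. Everything else is routine manipulation with the two nilpotent families $U_n,V_n$.
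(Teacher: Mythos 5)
Your argument is exactly the paper's: expand the product form \eqref{eq:Simon1}/\eqref{eq:N-matrix} of the Nevanlinna matrix using the nilpotency relations \eqref{eq:UV1}, observe that only the alternating chains with the $\le/<$ index pattern survive so the lower-right entry of $M_{2n}$ is $a_{2n}$, and identify $m_{22}(z)=C(z)$, whence the order, type and indicator coincide with those of the moment problem by \cite{B:P}. No gaps; the combinatorial bookkeeping you flag is handled in the paper in the same way.
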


\medskip
{\it Proof of Theorem~\ref{thm:main1}.}

 If $\alpha=\beta$ the result follows from \cite[Theorem 4.7]{B:S}.

Suppose next that $\alpha>\beta$.

For $i\le j$ we have
$$
u_iv_j\le Cv_i^{\b/\a}v_j\le CD^{\beta(1/\gamma-1/\a)}v_i^{\beta/\gamma}v_j^{\beta/\gamma},
$$
because the last inequality is equivalent to
$$
v_j^{\beta(1/\b-1/\gamma)}\le (Dv_i)^{\beta(1/\gamma-1/\a)},
$$
which is true by (iii) because $\beta(1/\gamma-1/\a)=\beta(1/\b-1/\gamma)>0$ as a consequence of \eqref{eq:harm}.
Therefore
\begin{equation*}
\begin{split}
|a_{2n}|&=\sum_{1\le k_1\le k_2<\ldots<k_{2n-1}\le k_{2n}}u_{k_1}v_{k_2}\ldots u_{k_{2n-1}}v_{k_{2n}}\\
&\le (CD^{\beta(1/\gamma-1/\a)})^n\sum_{1\le k_1\le k_2<\ldots<k_{2n-1}\le k_{2n}}\left(v_{k_1}v_{k_2}\cdots v_{k_{2n-1}}v_{k_{2n}}\right)^{\beta/\gamma}.
\end{split}
\end{equation*}
The last sum is majorized by the power series coefficient to $z^{2n}$ in 
\begin{equation}\label{eq:F}
F(z)=\prod_{n=1}^\infty\left(1+v_n^{\beta/\gamma}z\right)^2.
\end{equation} 
This shows that
$$
\mathcal M_{m_{22}}(r)=1+\sum_{n=1}^\infty |a_{2n}|r^{2n} \le \mathcal M_F(Kr),
$$
where $K=(CD^{\b(1/\g-1/\a)})^{1/2}$, and therefore $\rho=\rho_{m_{22}}\le \rho_F$, see
the explanation after \eqref{eq:modineq}.

Since $(v_n^{\beta/\gamma})\in\ell^\gamma$, the zeros of $F(z)$ has exponent of convergence $\le\gamma$ and therefore $\rho_F\le \gamma$, cf. \cite{BOAS}, and we conclude that $\rho\le\gamma$.

Suppose finally that $\alpha<\beta$. Introducing $D_u=\sum_{k=1}^\infty u_k,\;D_v=\sum_{k=1}^\infty v_k$
we  get
$$
|a_{2n}|\le D_uD_v \sum_{1\le k_2< k_3\le \ldots\le k_{2n-2}<k_{2n-1}} v_{k_2}u_{k_3}\ldots v_{k_{2n-2}}u_{k_{2n-1}}.
$$

By (ii) and (iii) we similarly get for $i<j$
$$
v_iu_j\le Cv_i v_j^{\b/\a} \le CD^{\beta/\gamma-1} v_i^{\beta/\gamma} v_j^{\beta/\gamma},
$$
so
$$
|a_{2n}|\le D_uD_v (CD^{\beta/\gamma-1})^{n-1} \sum_{1\le k_2< k_3\le \ldots\le k_{2n-2}<k_{2n-1}} \left(v_{k_2}v_{k_3}\ldots v_{k_{2n-2}}v_{k_{2n-1}}\right)^{\beta/\gamma}.
$$
The last sum is majorized by the power series coefficient to $z^{2n-2}$ in $F(z)$ given by \eqref{eq:F}. Again this shows that $\rho\le \gamma$.
$\quad\square$

\begin{lem}\label{thm:orderbelow}
Consider a symmetric  indeterminate moment problem of order $0<\rho<1$ and type $\tau$, and assume that the recurrence coefficients $(b_n)$ satisfy
$$
b_n\le C(n+1)^{\kappa}
$$
 for some $C>0$ and $\kappa$ (necessarily $>1$ by Carleman's criterion).
Then $\rho\ge 1/\kappa$ and if $\rho=1/\kappa$ then $\tau\ge \kappa C^{-1/\kappa}$. 
\end{lem}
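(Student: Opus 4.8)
The plan is to bound the coefficients $a_{2n}$ of the function $m_{22}(z)$ from Proposition~\ref{thm:status} from below by a single diagonal-type term and then apply the coefficient formulas \eqref{eq:deforder} and \eqref{eq:deftype}. Recall from \eqref{eq:a2n} that
$$
(-1)^n a_{2n}=\sum_{1\le k_1\le k_2<\ldots<k_{2n-1}\le k_{2n}}u_{k_1}v_{k_2}\cdots u_{k_{2n-1}}v_{k_{2n}},
$$
a sum of \emph{positive} terms; in particular $|a_{2n}|$ majorizes any single summand. The first step is to produce from $b_n\le C(n+1)^\kappa$ a lower bound on the individual factors $u_k=Q_{2k-1}^2(0)$ and $v_k=P_{2k}^2(0)$. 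Using \eqref{eq:P2n}, $v_k=\prod_{j=1}^{k}(b_{2j-2}/b_{2j-1})^2$ and $u_k=(1/b_0^2)\prod_{j=1}^{k-1}(b_{2j-1}/b_{2j})^2$; pairing the factors and using only the upper bound on the $b$'s in the denominators we get $v_ku_k\ge c\,b_0^{-2}\big(\prod_{j=1}^{2k-1}b_j\big)^{-2}\ge c'\big((2k)!\big)^{-2\kappa}C^{-(4k-2)}$ after replacing each $b_j$ by $C(j+1)^\kappa$ and absorbing harmless constants; more generally any partial product telescopes to a comparable expression. The cleanest choice is to take in $a_{2n}$ the term with $k_{2j-1}=k_{2j}=j$ for $j=1,\dots,n$, i.e.\ $k_\ell=\lceil \ell/2\rceil$, which gives
$$
|a_{2n}|\ \ge\ u_1v_1u_2v_2\cdots u_nv_n\ =\ \prod_{j=1}^n\frac{b_{2j-2}^2}{b_{2j-1}^2}\cdot\frac{b_{2j-3}^2}{b_{2j-2}^2}\cdots
$$
which after telescoping equals a constant times $\big(\prod_{j=1}^{2n-1}b_j^{-1}\big)^2$ (the $b_{-1}=b_0$ conventions only affect a bounded factor). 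Hence $|a_{2n}|\ge c_0 \big(C^{2n-1}(2n)!\big)^{-2\kappa}$ for a constant $c_0>0$ independent of $n$.

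From here everything is forced by the definitions. Writing the power series of $m_{22}$ as $\sum c_m z^m$ with $c_{2n}=a_{2n}$ and $c_{2n+1}=0$, Stirling's formula gives $\big((2n)!\big)^{1/(2n)}\sim 2n/e$, so
$$
\liminf_{n\to\infty}\ \frac{1}{\big(|a_{2n}|\big)^{-1/(2n)}}\ \cdot\ (2n)\ \le\ \frac{2}{\,?\,}
$$
— more carefully: $|a_{2n}|^{1/(2n)}\ge \big(c_0\big)^{1/(2n)} C^{-\kappa}\big((2n)!\big)^{-\kappa/n}\sim C^{-\kappa}(e/(2n))^{2\kappa}$, so $\log(1/|a_{2n}|^{1/(2n)})\le 2\kappa\log(2n)+O(1)$, whence by the first formula in \eqref{eq:deforder},
$$
\rho\ =\ \rho_{m_{22}}\ =\ \limsup_{m\to\infty}\frac{\log m}{\log\big(1/|c_m|^{1/m}\big)}\ \ge\ \limsup_{n\to\infty}\frac{\log(2n)}{2\kappa\log(2n)+O(1)}\ =\ \frac{1}{2\kappa}.
$$
Since the symmetric Hamburger problem has order $\rho_s=2\rho_{m_{22}}$ relative to the Stieltjes normalization — but here we work directly with the symmetric problem, whose order equals $\rho_{m_{22}}$ by Proposition~\ref{thm:status} — this must be reconciled with the statement's exponent $1/\kappa$ rather than $1/(2\kappa)$; the discrepancy is absorbed because the coefficients of $m_{22}$ sit on the \emph{even} powers only, so as a function of $z^2$ its order is twice that, and indeed $\rho_{m_{22}}(z)=1/\kappa$ when $b_n\asymp n^\kappa$. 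Concretely one should pass to $g(w)=\sum a_{2n}w^n$, note $\rho_g = 2\rho_{m_{22}}$ is \emph{not} what we want; rather $m_{22}(z)=g(z^2)$ forces $\rho_{m_{22}}=2\rho_g$, and the estimate above applied to $g$ (with $|a_{2n}|^{1/n}\gtrsim C^{-2\kappa}(e/(2n))^{4\kappa}$, giving $\rho_g\ge 1/(2\kappa)$) yields $\rho=\rho_{m_{22}}=2\rho_g\ge 1/\kappa$, as claimed.

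For the type statement, assume $\rho=1/\kappa$. Apply the first formula in \eqref{eq:deftype} to $m_{22}$, using only the nonzero coefficients $c_{2n}=a_{2n}$ and $\rho=1/\kappa$:
$$
\tau\ =\ \tau_{m_{22}}\ =\ \frac{1}{e\rho}\limsup_{n\to\infty}(2n)\,|a_{2n}|^{\rho/(2n)}\ \ge\ \frac{\kappa}{e}\limsup_{n\to\infty}(2n)\,\Big(c_0\,C^{-2\kappa(2n-1)}\big((2n)!\big)^{-2\kappa}\Big)^{1/(2n\kappa)}.
$$
Here $\big((2n)!\big)^{-1/n}\sim (e/(2n))^{2}$ and $\big(C^{-2\kappa(2n-1)}\big)^{1/(2n\kappa)}\to C^{-2}$, while $c_0^{1/(2n\kappa)}\to 1$, so the quantity inside the limsup tends to $(2n)\cdot C^{-2}\cdot (e/(2n))^{2}=e^2C^{-2}/(2n)\to 0$ — which would give $\tau\ge 0$, useless. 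This signals that the naive single-term lower bound is too lossy for the type, and the main obstacle is precisely here: to get the sharp constant $\kappa C^{-1/\kappa}$ one must instead choose the optimizing multi-index in \eqref{eq:a2n} — spreading the indices $k_1<k_2<\cdots$ geometrically or linearly so that $\prod b_{k_j}^{-1}$ is as large as possible given the ordering constraints — or, more robustly, compare $m_{22}$ from below with a product $\prod(1+v_n^{?}z)$-type entire function built from the true asymptotics of $b_n$ and invoke the lower bound for the type of canonical products with a prescribed density of zeros. I would carry this out by showing $|a_{2n}|\ge$ (coefficient of $z^{2n}$ in $\prod_{k=1}^\infty(1+\epsilon_k z)$) for suitable $\epsilon_k\ge c\,k^{-\kappa}C^{-2}$ obtained by pairing the telescoping factors more carefully, then using that such a canonical product of genus zero with zeros of density $\sim (C^2/c)^{1/\kappa}$ at $k^{\kappa}$ has type exactly $\kappa(c/C^2)^{1/\kappa}\cdot(\text{const})$; matching constants against \eqref{eq:deftype} and tracking that $\rho=1/\kappa$ forces the sum $\sum \epsilon_k^{1/\kappa}$ to diverge borderline, one extracts $\tau\ge \kappa C^{-1/\kappa}$. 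The bookkeeping of constants in the telescoping product and the genus-zero type computation is the technical heart; the order statement, by contrast, is routine once the single-term lower bound is in hand.
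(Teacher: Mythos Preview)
Your approach via a single term in the coefficient expansion of $m_{22}=C$ is sound, but you botch the telescoping and this single error propagates through both the order and the type computations. With $u_k=Q_{2k-1}^2(0)$ and $v_k=P_{2k}^2(0)$ as in \eqref{eq:P2n}, one has $u_kv_k=1/b_{2k-1}^2$ (check $k=1$: $u_1=1/b_0^2$, $v_1=b_0^2/b_1^2$), so
\[
|a_{2n}|\ \ge\ \prod_{k=1}^n u_kv_k\ =\ \prod_{k=1}^n b_{2k-1}^{-2}\ \ge\ \prod_{k=1}^n \bigl(C(2k)^\kappa\bigr)^{-2}\ =\ C^{-2n}2^{-2n\kappa}(n!)^{-2\kappa},
\]
\emph{not} a constant times $\bigl(\prod_{j=1}^{2n-1}b_j\bigr)^{-2}$ as you write. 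Only the odd-indexed $b$'s survive, so you have $n$ factors, not $2n$; this factor-of-two slip is exactly why you end up with $\rho\ge 1/(2\kappa)$ and a type bound tending to $0$. (Your detour through $g(w)$ with $m_{22}(z)=g(z^2)$ does not repair this: the underlying coefficient estimate is already off by a square.) With the correct bound, the coefficient formulas \eqref{eq:deforder}--\eqref{eq:deftype} applied directly to $m_{22}$ at $m=2n$ give
\[
\frac{\log(2n)}{\log\bigl(1/|a_{2n}|^{1/(2n)}\bigr)}\ \ge\ \frac{\log(2n)}{\kappa\log n+O(1)}\ \longrightarrow\ \frac{1}{\kappa},
\]
and, if $\rho=1/\kappa$,
\[
\tau\ \ge\ \frac{\kappa}{e}\,\limsup_n\,(2n)\,|a_{2n}|^{1/(2n\kappa)}\ \ge\ \frac{\kappa}{e}\,\limsup_n\,(2n)\,C^{-1/\kappa}\,2^{-1}(n!)^{-1/n}\ =\ \frac{\kappa}{e}\cdot C^{-1/\kappa}\cdot e\ =\ \kappa C^{-1/\kappa},
\]
using $n(n!)^{-1/n}\to e$. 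So no elaborate optimization of the multi-index or canonical-product comparison is needed for the type; your ``naive single-term lower bound'' already suffices once computed correctly.

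For comparison, the paper does not touch $m_{22}$ at all. It works instead with the function $P(z)=\bigl(\sum |P_n(z)|^2\bigr)^{1/2}$ via the $L^2$ identity $\sum_n c_n^2 r^{2n}=\tfrac{1}{2\pi}\int_0^{2\pi}P^2(re^{it})\,dt$, where $c_n\ge b_{n,n}=1/(b_0\cdots b_{n-1})$ is the leading coefficient of $P_n$. The hypothesis $b_j\le C(j+1)^\kappa$ then gives $\sum(r/C)^{2n}/(n!)^{2\kappa}$ as a lower bound for the integral, and this auxiliary function has order $1/\kappa$ and type $2\kappa$, yielding the result for $P^2$ and hence for $P$. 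Both routes ultimately compare with the same model function, but the paper's argument uses \emph{all} the $b_j$, works for any (not necessarily symmetric) problem with minor changes, and avoids the combinatorics of \eqref{eq:a2n} entirely.
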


\begin{proof} We need the coefficients of the orthonormal polynomials
$$
P_n(x)=\sum_{k=0}^n b_{k,n}x^k 
$$
and the quantity 
$$
c_n=\left(\sum_{j=n}^\infty b_{n,j}^2\right)^{1/2}
$$
studied in \cite[p. 112]{B:S}, where it is proved that
$$
\sum_{n=0}^\infty r^{2n}c_n^2=\frac{1}{2\pi}\int_0^{2\pi} P^2(re^{it})\,dt,\quad r\ge 0.
$$
Here $P$ is the function from Theorem~\ref{thm:indet} and it has order $\rho$ and type $\tau$ by assumption, cf. \cite{B:P}. Using that $c_n>b_{n,n}=1/(b_0\ldots b_{n-1})$,
we get
$$
\sum_{n=0}^\infty\frac{r^{2n}}{b_0^2b_1^2\ldots b_{n-1}^2}\le \frac{1}{2\pi}\int_0^{2\pi} P^2(re^{it})\,dt,
$$
hence by the assumption about $(b_n)$
$$
\sum_{n=0}^\infty\frac{(r/C)^{2n}}{(n!)^{2\kappa}}\le  \frac{1}{2\pi}\int_0^{2\pi} P^2(re^{it})\,dt.
$$
The entire function
$$
\sum_{n=0}^\infty \frac{z^{2n}}{(n!)^{2\kappa}}
$$
has order $1/\kappa$ and type $2\kappa$ by \eqref{eq:deforder} and \eqref{eq:deftype} and the result follows.
\end{proof}

\begin{rem} {\rm In a previous version of Lemma~\ref{thm:orderbelow} we needed the assumption that $b_n$ is eventually increasing. The referee informed us that this condition is not necessary and suggested a variational proof of the simplified result, also indicating that one can obtain results about type in this way. After this remark we realized that our proof could be modified to the present version. 
}
\end{rem}

We now return to the symmetric indeterminate moment problem with Nevanlinna matrix \eqref{eq:N-matrix} and recall the notation
$$
v_n=P_{2n}^2(0),\;u_n=Q_{2n-1}^2(0),\quad n\ge 1.
$$
We assume that
\begin{equation}\label{eq:add}
v_n=c_1n^{-1/\b}(1+\tau_n),\quad u_n=c_2 n^{-1/\a}(1+\rho_n),
\end{equation}
for constants $c_1,c_2>0,0<\a,\b<1$, and assume further
that   there exists a constant $K>0$ such that $|\tau_n|, |\rho_n|\le K/n$ for $n\ge 1$.

\bigskip

Note that by Lemma~\ref{thm:precas} all this is satisfied for the symmetric Hamburger moment problem of Proposition~\ref{thm:sumup}.

\bigskip

For $a\in\N$ consider
\begin{equation}\label{eq:sigmana}
\sigma_n(a)=\sum_{a\le k_1\le k_2<\ldots<k_{2n-1}\le k_{2n}}u_{k_1}v_{k_2}\ldots u_{k_{2n-1}}v_{k_{2n}}
\end{equation}
and 
\begin{equation}\label{eq:sna}
s_n(a)=\sum_{a\le k_1\le k_2<\ldots<k_{2n-1}\le k_{2n}}k_1^{-1/\a}k_2^{-1/\b}\ldots k_{2n-1}^{-1/\a}k_{2n}^{-1/\b}.
\end{equation}

Note that $|a_{2n}|=\sigma_n(1)$,
cf. \eqref{eq:a2n}.

Under these assumptions we have:

\begin{lem}\label{thm:type1}
For all $a\in\N$ one has
\begin{equation}\label{eq:or1}
\lim_{n\to\infty}\root{n}\of {\frac{\sigma_n(1)}{s_n(a)}}=c_1c_2,
\end{equation}
so  the entire functions
\begin{equation}\label{eq:type1}
m_{22}(z)=1+\sum_{n=1}^\infty (-1)^n \sigma_n(1)z^{2n},\quad T_a(z)=\sum_{n=1}^\infty
s_n(a)(c_1c_2)^{n}z^{2n} 
\end{equation}
have the same order and type.
\end{lem}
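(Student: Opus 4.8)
The plan is to establish the asymptotic identity \eqref{eq:or1}; once that is in hand, the equality of order and type of the two functions in \eqref{eq:type1} is immediate from the coefficient formulas \eqref{eq:deforder} and \eqref{eq:deftype}, since the $n$-th coefficients of $m_{22}$ and of $T_a$ (up to a harmless $z^{2n}$ versus $z^{2n}$ bookkeeping and the sign $(-1)^n$, which does not affect moduli) have ratio whose $n$-th root tends to $1$. So the whole content is \eqref{eq:or1}, and I would prove it by squeezing $\sigma_n(1)/s_n(a)$ between two quantities both of whose $n$-th roots tend to $c_1c_2$.

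First I would reduce $\sigma_n(1)$ to $\sigma_n(a)$. The extra terms in $\sigma_n(1)$ with some index $k_j<a$ involve only finitely many ($<a$) possible values for a bounded number of the leading indices, so one expects $\sigma_n(a)\le \sigma_n(1)\le (\text{const depending on }a)\cdot \sigma_n(a-1)$, or more simply a telescoping bound showing $\sigma_n(1)/\sigma_n(a)$ is bounded above and below by constants depending on $a$ but not $n$; its $n$-th root then tends to $1$. Hence it suffices to prove $\root{n}\of{\sigma_n(a)/s_n(a)}\to c_1c_2$. Now I would insert the two-sided bounds coming from \eqref{eq:add}: for $k\ge a$ we have $u_k = c_2 k^{-1/\a}(1+\rho_k)$ with $|\rho_k|\le K/k\le K/a$, and similarly for $v_k$. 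Thus, factoring out $c_1^n c_2^n$ from the product $u_{k_1}v_{k_2}\cdots u_{k_{2n-1}}v_{k_{2n}}$, each term of $\sigma_n(a)$ equals $(c_1c_2)^n$ times the corresponding term of $s_n(a)$ times a product $\prod_{j}(1+\rho_{k_{2j-1}})(1+\tau_{k_{2j}})$ of $2n$ factors, each lying in $[1-K/a,\,1+K/a]$ (choosing $a$ large enough that $K/a<1$; for general $a$ one uses that only finitely many of the $\rho_k,\tau_k$ can be large and bounds them crudely). Therefore
$$
(1-K/a)^{2n}(c_1c_2)^n s_n(a)\le \sigma_n(a)\le (1+K/a)^{2n}(c_1c_2)^n s_n(a),
$$
which gives $(1-K/a)^2 c_1c_2 \le \liminf \root{n}\of{\sigma_n(a)} / \root n\of{s_n(a)} \le \limsup(\cdots)\le (1+K/a)^2 c_1c_2$, provided $\root n\of{s_n(a)}$ stays bounded away from $0$ and $\infty$.

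The main obstacle is precisely to rule out degenerate behaviour of $s_n(a)$: one needs that $\root{n}\of{s_n(a)}$ has a finite nonzero $\limsup$ (and the same for $\sigma_n(1)$, which then follows), so that the inequalities above can be combined. Since $0<\a,\b<1$ forces $1/\a,1/\b>1$, comparison of $s_n(a)$ with an iterated integral or with a product of convergent series shows $s_n(a)\le \big(\sum_{k\ge a}k^{-1/\a}\big)^n\big(\sum_{k\ge a}k^{-1/\b}\big)^n<\infty$, giving the upper bound; for the lower bound one restricts the sum to the diagonal-type region and compares again with a positive power of a fixed constant, or invokes that $m_{22}$ is a genuine nonconstant entire function of finite positive order so its coefficients cannot decay faster than any exponential rate — indeed by Proposition~\ref{thm:status} the problem is indeterminate of some order $\rho>0$, forcing $\limsup\root{n}\of{|a_{2n}|}$ to be finite and, since $m_{22}$ is transcendental, the coefficients are not eventually zero. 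Once $\root n\of{s_n(a)}$ is pinned between positive finite bounds, letting $a\to\infty$ in the displayed squeeze (the left-hand reduction $\sigma_n(1)\sim\sigma_n(a)$ in the $n$-th-root sense being uniform enough) yields $\lim_n\root{n}\of{\sigma_n(1)/s_n(a)}=c_1c_2$ for every fixed $a$, which is \eqref{eq:or1}. Finally, writing $T_a(z)=\sum s_n(a)(c_1c_2)^n z^{2n}$, the ratio of its $n$-th coefficient to $\sigma_n(1)$ has $n$-th root tending to $1$, so by \eqref{eq:deforder}--\eqref{eq:deftype} $T_a$ and $m_{22}$ share order and type.
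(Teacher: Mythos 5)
Your plan follows the same general architecture as the paper's proof (factor out $(c_1c_2)^n$, control the error factors $(1+\rho_k)(1+\tau_k)$ by taking the starting index large, then compare the truncated sums with the full ones), but the step you treat as routine is precisely the technical heart of the lemma, and the form in which you assert it is false. The claim that $\sigma_n(1)/\sigma_n(a)$ (and likewise $s_n(1)/s_n(a)$) is bounded above by a constant depending only on $a$ does not hold: fixing a small index such as $k_1=k_2=1$ effectively shortens the chain by one block, and since the associated entire functions have order $\gamma<1$, sums over chains of consecutive lengths have ratios that blow up polynomially in $n$. Concretely, for $u_k=v_k=k^{-s}$ with $s>1$ one has $\sigma_n(1)\ge \sigma_{n-1}(2)$ (take $k_1=k_2=1$), while $\sigma_n(2)\le C\,n^{2-2s}\sigma_{n-1}(2)$ because the last two indices are forced to be $\ge n$; hence $\sigma_n(1)/\sigma_n(2)\ge c\,n^{2s-2}\to\infty$. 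What is true, and what the paper proves, is that the ratio grows at most polynomially in $n$ (which still has $n$-th root tending to $1$): this comes from the substitution $k_j\mapsto k_j+a-1$ combined with the observation $k_{2j}\ge k_{2j-1}\ge j$, so the correction factors are dominated by $\prod_{j=1}^n(1+(a-1)/j)^{-1/\a-1/\b}$, which Lemma~\ref{thm:deltan} bounds by $\bigl(L(a)n^{a-1}\bigr)^{-2/\gamma}$, giving \eqref{eq:sn}. Your argument needs such a comparison both for $\sigma$ and for $s$ (to pass from the large $a$ forced by $|\rho_k|,|\tau_k|\le\eps$ back to the fixed $a$ of the statement), and nothing in your proposal supplies it. Note also that the same observation $k_{2j-1},k_{2j}\ge j$ is what lets the paper bound $\sigma_n(1)\le (c_1c_2)^n s_n(1)\prod_{j=1}^n(1+K/j)^2$ with only polynomial loss, instead of your exponential factor $(1+K/a)^{2n}$ on the upper-bound side.

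A second, smaller defect: your concern that $\root{n}\of{s_n(a)}$ must stay bounded away from $0$ and $\infty$ is both unnecessary and unattainable. Since the functions in question have order $\gamma<1$, their coefficients decay super-exponentially, so $\root{n}\of{s_n(a)}\to 0$; and your proposed justification --- that a nonconstant entire function of finite positive order cannot have coefficients decaying faster than every exponential rate --- is simply false (by \eqref{eq:deftype}, finite type of order $\rho$ means $|c_n|^{1/n}\approx (e\rho\tau/n)^{1/\rho}\to 0$). Fortunately this is harmless: the two-sided inequality $(1-\eps)^{2n}(c_1c_2)^n s_n(a)\le\sigma_n(a)\le(1+\eps)^{2n}(c_1c_2)^n s_n(a)$ already bounds $\root{n}\of{\sigma_n(a)/s_n(a)}$ directly, with no control of $\root{n}\of{s_n(a)}$ itself needed. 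The genuine gap is the polynomial comparison of $s_n(1)$ with $s_n(a)$ and of $\sigma_n(1)$ with $\sigma_n(a)$ described above; once that is established (as in the paper), your squeeze and the passage $\eps\to 0$ do yield \eqref{eq:or1}, and the final deduction of equal order and type from \eqref{eq:deforder} and \eqref{eq:deftype} is fine.
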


\begin{proof}
We have
\begin{equation*}
\begin{split} s_n(1) &\ge s_n(a)\\
&=\sum_{1\le k_1\le k_2<\ldots<k_{2n-1}\le k_{2n}}(k_1+a-1)^{-1/\a}(k_2+a-1)^{-1/\b}\ldots (k_{2n}+a-1)^{-1/\b}\\
&=\sum_{1\le k_1\le k_2<\ldots<k_{2n-1}\le k_{2n}}
k_1^{-1/\a}k_2^{-1/\b}\ldots k_{2n}^{-1/\b}\\ &\times
(1+(a-1)/k_1)^{-1/\a}(1+(a-1)/k_2)^{-1/\b}\ldots (1+(a-1)/k_{2n})^{-1/\b}\\
&\ge\sum_{1\le k_1\le k_2<\ldots<k_{2n-1}\le k_{2n}}
k_1^{-1/\a}k_2^{-1/\b}\ldots k_{2n}^{-1/\b}\prod_{j=1}^n(1+(a-1)/j)^{-1/\a-1/\b}
\end{split}
\end{equation*}
since $k_2\ge k_1\ge 1,\;k_4\ge k_3\ge 2,\;\ldots,\;k_{2n}\ge k_{2n-1}\ge n$.

However, by Lemma~\ref{thm:deltan}
$$
\prod_{j=1}^n (1+(a-1)/j)=\frac{n^{a-1}}{\Gamma(a)}\left(1+\frac{\delta_n(a-1)}{n}\right)
\le L(a)n^{a-1}
$$
for some constant $L(a)$ depending on $a$. We therefore get
\begin{equation}\label{eq:sn}
s_n(1)\ge s_n(a)\ge s_n(1)\left(L(a)n^{a-1}\right)^{-2/\gamma},
\end{equation}
showing that for $a\in\N$
\begin{equation}\label{eq:1a}
(s_n(1))^{1/n} \sim (s_n(a))^{1/n},\quad n\to\infty.
\end{equation}

We also have
\begin{eqnarray*}
\begin{split}\sigma_n(1) &\ge \sigma_n(a) \\
 &= (c_1c_2)^n \sum_{a\le k_1\le k_2<\ldots<k_{2n-1}\le k_{2n}} k_1^{-1/\a}k_2^{-1/\b}\ldots k_{2n-1}^{-1/\a}k_{2n}^{-1/\b}\\
&\times (1+\rho_{k_1})(1+\tau_{k_2})\ldots (1+\rho_{k_{2n-1}})(1+\tau_{k_{2n}}).
\end{split}
\end{eqnarray*}
Given $\eps>0$ there exists $a\in\N$ such that $|\tau_k|,|\rho_k|\le \eps$ for $k\ge a$.
For such $a$ we then have
$$
\sigma_n(1)\ge\sigma_n(a)\ge (c_1c_2)^{n}s_n(a)(1-\eps)^{2n},
$$
so by \eqref{eq:sn}
$$
\sigma_n(1)\ge (c_1c_2)^{n}s_n(1)\left(L(a)n^{a-1}\right)^{-2/\gamma}(1-\eps)^{2n}.
$$
Since $\eps>0$ is arbitrary we find
\begin{equation}\label{eq:liminf}
\liminf_{n\to\infty}\left(\frac{\sigma_n(1)}{s_n(1)}\right)^{1/n}\ge c_1c_2.
\end{equation}

On the other hand we have
\begin{eqnarray*}
\begin{split}\sigma_n(1)& 
 \le (c_1c_2)^{n} \sum_{1\le k_1\le k_2<\ldots<k_{2n-1}\le k_{2n}} k_1^{-1/\a}k_2^{-1/\b}\ldots k_{2n-1}^{-1/\a}k_{2n}^{-1/\b}\\
&\times (1+K/k_1)(1+K/k_2)\ldots (1+K/k_{2n-1})(1+K/k_{2n})\\
&\le (c_1c_2)^{n}s_n(1)\prod_{j=1}^n(1+K/j)^2=(c_1c_2)^{n}s_n(1)\left(\frac{n^K}{\Gamma(K+1)}
(1+\delta_n(K)/n)\right)^2,
\end{split}
\end{eqnarray*}
where we have used Lemma~\ref{thm:deltan}.

We then get
$$
\limsup_{n\to\infty} \left(\frac{\sigma_n(1)}{s_n(1)}\right)^{1/n}\le c_1c_2,
$$
which together with \eqref{eq:liminf} proves that
\begin{equation}\label{eq:nthroot}
\sigma_n(1)^{1/n}\sim s_n(1)^{1/n}c_1c_2.
\end{equation}
Combining \eqref{eq:1a} and \eqref{eq:nthroot} we get \eqref{eq:or1}.
Using \eqref{eq:deforder} and \eqref{eq:deftype}
it follows from \eqref{eq:or1}  that the functions \eqref{eq:type1}
have the same order and type.
\end{proof}

Under the same assumptions as in Lemma~\ref{thm:type1} we have:

\begin{lem}\label{thm:more1} For $n\in\N$ define
\begin{equation}\label{eq:gamman}
\g_n=\sum_{1\le k_1\le k_2<\ldots<k_{2n-1}\le k_{2n}}\left(k_1k_2\ldots k_{2n-1}k_{2n}\right)^{-1/\gamma},
\end{equation}
where $\g$ is the harmonic mean of $\a$ and $\b$, i.e., $\g_n=\g_n(2/\g)$ from \eqref{eq:mz}.
Then
\begin{equation}\label{eq:or2}
\root{n}\of{\g_n}\sim \root{n}\of{s_n(1)}
\end{equation} 
so the entire functions
$$
H(z)=\sum_{n=1}^\infty s_n(1)z^{2n},\quad G(z)=\sum_{n=1}^\infty \gamma_nz^{2n}
$$
have the same order and type.
\end{lem}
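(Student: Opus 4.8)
The plan is to compare $\gamma_n$ with $s_n(1)$ by a two–sided estimate, exactly in the spirit of the proof of Lemma~\ref{thm:type1}. Recall
$$
\gamma_n=\sum_{1\le k_1\le k_2<\ldots<k_{2n-1}\le k_{2n}}(k_1k_2\cdots k_{2n})^{-1/\g},\qquad
s_n(1)=\sum_{1\le k_1\le k_2<\ldots<k_{2n-1}\le k_{2n}}k_1^{-1/\a}k_2^{-1/\b}\cdots k_{2n-1}^{-1/\a}k_{2n}^{-1/\b},
$$
and $\g^{-1}=\tfrac12(\a^{-1}+\b^{-1})$, so that for each admissible index pair $(k_{2j-1},k_{2j})$ with $k_{2j-1}\le k_{2j}$ one has, writing $a=1/\a,\ b=1/\b$,
$$
k_{2j-1}^{-a}k_{2j}^{-b}=(k_{2j-1}k_{2j})^{-1/\g}\Bigl(\tfrac{k_{2j}}{k_{2j-1}}\Bigr)^{(a-b)/2}.
$$
Since $k_{2j-1}\le k_{2j}$, the ratio $k_{2j}/k_{2j-1}\ge 1$; hence if $a\ge b$ the extra factor is $\ge 1$ and we immediately get $s_n(1)\ge\gamma_n$, while if $a<b$ we get the reverse inequality term by term. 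In either case one of the two inequalities $\gamma_n\le s_n(1)$ or $s_n(1)\le\gamma_n$ is free, and the content is the opposite bound, which must lose only a subexponential factor so that the $n$-th roots are asymptotic.

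For the harder direction I would bound the extra factor $\prod_{j=1}^n(k_{2j}/k_{2j-1})^{|a-b|/2}$ over the range of summation. The crucial point is the chain of inequalities $k_2\ge k_1\ge 1$, $k_4\ge k_3\ge 2,\ldots$, $k_{2n}\ge k_{2n-1}\ge n$ used already in Lemma~\ref{thm:type1}: combined with monotonicity $k_{2j-1}\ge k_{2j-2}\ge\cdots$, the telescoping product $\prod_{j=1}^n k_{2j}/k_{2j-1}$ can be controlled. Concretely, write the product as a ratio of products over even-indexed versus odd-indexed coordinates; using $k_{2j-1}\ge j$ in the denominator and pairing $k_{2j}\le k_{2j+1}$ one obtains $\prod_j k_{2j}/k_{2j-1}\le k_{2n}/\prod_{j=1}^{n-1}\tfrac{j+1}{\,?\,}$... — more robustly, since all $k_i\le k_{2n}$ and the indices are essentially forced to grow at least linearly, the whole correction factor is bounded by a fixed power of $n$ times $k_{2n}^{O(1)}$, which when summed against the rapidly decaying weights contributes only a polynomial-in-$n$ distortion. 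The cleanest route, mirroring \eqref{eq:sn}–\eqref{eq:1a}, is: insert the substitution $k_i\mapsto k_i+a-1$ as in Lemma~\ref{thm:type1} to shift the starting index, which costs a factor $(L(a)n^{a-1})^{\pm 2/\g}$, and choose $a$ so the correction factor on the shifted range is uniformly bounded; this is exactly the device that produced \eqref{eq:1a}, and the same computation applies here with $k_{2j}/k_{2j-1}$ in place of $(1+(a-1)/k_j)$.

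Having established $\gamma_n^{1/n}\sim s_n(1)^{1/n}$, the statement about $H(z)=\sum s_n(1)z^{2n}$ and $G(z)=\sum\gamma_n z^{2n}$ follows immediately from the coefficient formulas \eqref{eq:deforder} and \eqref{eq:deftype} for order and type, since these depend on the coefficients only through $\limsup_n(\text{coeff})^{1/n}$ (for order, even only through $\log n/\log(\text{coeff}^{-1/n})$) and through $\limsup_n n(\text{coeff})^{\rho/n}$ (for type); two coefficient sequences with asymptotically equal $n$-th roots therefore yield the same order and, when that order is positive and finite, the same type.

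The main obstacle I anticipate is getting a clean uniform bound on the telescoping correction factor $\prod_{j=1}^n (k_{2j}/k_{2j-1})^{|a-b|/2}$ over the simplex of summation indices — it is not bounded pointwise by a constant, only after being absorbed against the weights or after the index shift $k_i\mapsto k_i+a-1$. Once one commits to the shift-and-estimate strategy of Lemma~\ref{thm:type1} this becomes routine, but identifying that this is the right move (rather than trying a pointwise comparison) is the key step. Everything else — the free inequality in one direction, and the passage from $n$-th-root asymptotics to equality of order and type — is immediate.
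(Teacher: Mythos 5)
Your easy direction (the termwise comparison inside each pair, giving $s_n(1)\le\g_n$ or $s_n(1)\ge\g_n$ according to the sign of $1/\a-1/\b$) and your final step (passing from $\root{n}\of{\g_n}\sim\root{n}\of{s_n(1)}$ to equality of order and type via \eqref{eq:deforder} and \eqref{eq:deftype}) are correct and agree with the paper. The gap is in the hard direction, and the device you propose cannot close it. The shift $k_i\mapsto k_i+a-1$ in the proof of Lemma~\ref{thm:type1} works there because the correction factors are of the form $(1+(a-1)/k_i)^{-1/\a}$ or $(1+K/k_i)$: they tend to $1$ and are controlled on the whole region through $k_{2j}\ge k_{2j-1}\ge j$. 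It does nothing to your correction factor $\prod_{j}(k_{2j}/k_{2j-1})^{|1/\a-1/\b|/2}$, which is unbounded on the simplex of summation no matter how the starting index is shifted (fix $k_1,\dots,k_{2n-1}$ and let $k_{2n}\to\infty$), so no choice of $a$ makes it ``uniformly bounded on the shifted range''. Your fallback --- that the factor is at most a fixed power of $n$ times $k_{2n}^{O(1)}$ and can be ``absorbed against the weights'' --- is not an argument either: the correct telescoping bound (using $k_{2j+1}\ge k_{2j}$) is $\prod_j k_{2j}/k_{2j-1}\le k_{2n}/k_1$, and inserting $k_{2n}^{(1/\b-1/\a)/2}$ into the summand changes the weight of the largest variable from $k_{2n}^{-1/\g}$ to $k_{2n}^{-1/\a}$ (resp.\ $k_{2n}^{-1/\b}$), so the modified sum is no longer $\g_n$, and comparing it with $\g_n$ up to subexponential factors is precisely the problem you started with.

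The missing idea in the paper is a re-pairing across the pair boundaries. Inside a pair one has $k_{2j-1}\le k_{2j}$ and the comparison with $(k_{2j-1}k_{2j})^{-1/\g}$ goes one way; across a boundary one has the strict inequality $k_{2j}<k_{2j+1}$, so the product $k_{2j}^{-1/\b}k_{2j+1}^{-1/\a}$ compares with $(k_{2j}k_{2j+1})^{-1/\g}$ in the \emph{opposite} direction. Accordingly one strips off the extreme variables --- for $\a<\b$, bound the sums over $k_1$ and $k_{2n}$ by $\zeta(1/\a)$ and $\zeta(1/\b)$; for $\a>\b$, insert an extra variable $k_{2n+1}$ at the cost of a factor $\zeta(1/\a)^{-1}$ and set $k_1=1$ --- and then the re-paired couples $(k_{2j},k_{2j+1})$ give the reverse termwise inequality for free. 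Doing this twice and relabelling indices (the substitution $k_{j+2}=l_j+1$, whose cost is $\bigl((1+1/k_1)\cdots(1+1/k_{2n})\bigr)^{-1/\g}\ge(4n^2)^{-1/\g}$) yields $(\zeta(1/\a)\zeta(1/\g))^{-1}(4n^2)^{-1/\g}\g_n\le s_n(1)\le\g_n$ when $\a>\b$, and $\g_n\le s_n(1)\le\zeta(1/\a)\zeta(1/\b)\zeta^2(1/\g)\g_{n-2}$ when $\a<\b$; the losses are constants, a fixed power of $n$, and a shift of the index by at most $2$, which is exactly the subexponential control needed for \eqref{eq:or2}. Without this re-pairing step (or a genuine substitute for it) your hard direction does not go through.
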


\begin{proof} If $\a=\b$ there is nothing to prove.

Suppose next that $\a>\b$, hence $1/\a<1/\gamma<1/\b$.
From the inequalities
$$
k_{2j-1}^{-1/\a}k_{2j}^{-1/\b}\le k_{2j-1}^{-1/\g}k_{2j}^{-1/\g},\quad j=1,\ldots,n,
$$
which hold because 
$$
k_{2j-1}^{1/\g-1/\a}\le k_{2j}^{1/\b-1/\g},\quad j=1,\ldots,n,
$$
we get $s_n(1)\le \g_n$.

We also have
\begin{eqnarray*}
s_n(1) &\ge& \zeta(1/\a)^{-1} \sum_{1\le k_1\le k_2<\ldots<k_{2n-1}\le k_{2n}<k_{2n+1}}k_1^{-1/\a}k_2^{-1/\b}\ldots k_{2n-1}^{-1/\a}k_{2n}^{-1/\b}k_{2n+1}^{-1/\a}\\
&\ge& \zeta(1/\a)^{-1} \sum_{1\le k_2<\ldots<k_{2n-1}\le k_{2n}<k_{2n+1}}k_2^{-1/\b}\ldots k_{2n-1}^{-1/\a}k_{2n}^{-1/\b}k_{2n+1}^{-1/\a},
\end{eqnarray*}
where we have added the extra summation variable $k_{2n+1}$ and then put $k_1=1$. Using
$$
k_{2j}^{-1/\b}k_{2j+1}^{-1/\a}\ge k_{2j}^{-1/\g}k_{2j+1}^{-1/\g},\quad j=1,\ldots,n,
$$
we get
$$
s_n(1)\ge \zeta(1/\a)^{-1} \sum_{1\le k_2<\ldots<k_{2n-1}\le k_{2n}<k_{2n+1}}\left(k_2k_3\ldots k_{2n}k_{2n+1}\right)^{-1/\g}.
$$
Applying the same procedure once more we get
$$
s_n(1)\ge (\zeta(1/\a)\zeta(1/\g))^{-1} \sum_{1< k_3\le k_4<\ldots< k_{2n+1}\le k_{2n+2}}\left(k_3\ldots k_{2n+2}\right)^{-1/\g}.
$$
Substituting $k_{j+2}=l_j+1,j=1,\ldots, 2n$ and then renaming $l_j$ to $k_j$ we get
\begin{eqnarray*}
s_n(1) &\ge & (\zeta(1/\a)\zeta(1/\g))^{-1} \sum_{1\le k_1\le k_2<\ldots< k_{2n-1}\le k_{2n}}\left((k_1+1)\ldots (k_{2n}+1)\right)^{-1/\g}\\
&=& (\zeta(1/\a)\zeta(1/\g))^{-1} \sum_{1\le k_1\le k_2<\ldots< k_{2n-1}\le k_{2n}}(k_1\ldots k_{2n})^{-1/\g}\\
 &\times &\left((1+1/k_1)\ldots (1+1/k_{2n})\right)^{-1/\g}\\
&\ge & (\zeta(1/\a)\zeta(1/\g))^{-1} (4n^2)^{-1/\g}\gamma_n,
\end{eqnarray*}

where we have used that
$$
\prod_{j=1}^n(1+1/k_{2j-1})(1+1/k_{2j})\le \prod_{j=1}^n (1+1/j)^2=(n+1)^2\le 4n^2.
$$
Summing up we have
\begin{equation}\label{eq:sumup1}
 (\zeta(1/\a)\zeta(1/\g))^{-1} (4n^2)^{-1/\g}\gamma_n\le s_n(1)\le \g_n,
\end{equation}
and \eqref{eq:or2} follows.

\medskip
Suppose finally that $\a<\b$, hence $1/\b<1/\g<1/\a$.

We then get
\begin{eqnarray*}
s_n(1) &\le& \zeta(1/\a)\zeta(1/\b) \sum_{1\le k_2< k_3\le\ldots< k_{2n-1}}k_2^{-1/\b}k_3^{-1/\a}\ldots k_{2n-1}^{-1/\a}\\
&\le& \zeta(1/\a)\zeta(1/\b) \sum_{1\le k_2< k_3\le\ldots< k_{2n-1}}(k_2 k_3\ldots k_{2n-1})^{-1/\g}
\end{eqnarray*}
because
$$
k_{2j}^{-1/\b}k_{2j+1}^{-1/\a} \le (k_{2j}k_{2j+1})^{-1/\g},\quad j=1,\ldots,n-1.
$$
Applying the same procedure once more we get
\begin{eqnarray*}
s_n(1) &\le& \zeta(1/\a)\zeta(1/\b)\zeta^2(1/\g) \sum_{1< k_3\le k_4<\ldots< k_{2n-3}\le k_{2n-2}}(k_3\ldots k_{2n-2})^{-1/\g}\\
&\le& \zeta(1/\a)\zeta(1/\b)\zeta^2(1/\g) \g_{n-2}.
\end{eqnarray*}
On the other hand, it is easy to see, by the same methods as before, that $s_n(1)\ge \g_n$, so summing up  we have
\begin{equation}\label{eq:sumup2}
 \g_n\le s_n(1)\le \zeta(1/\a)\zeta(1/\b)\zeta^2(1/\g) \g_{n-2},
\end{equation}
and \eqref{eq:or2} follows.
Using \eqref{eq:deforder} and \eqref{eq:deftype} it follows that the two functions have the same order and type.
\end{proof}

\section{On Valent's conjecture about order and type}

In this section we shall apply our results about symmetric moment problems.

\medskip
{\it Proof of Theorem~\ref{thm:valent-order}.}

 By Proposition~\ref{thm:order/type} it suffices to prove that the order $\rho$ of the symmetric Hamburger moment problem of Proposition~\ref{thm:sumup} is equal to $2/p$.

By \eqref{eq:asymp} we have
$$
v_n\sim c_1n^{-1/\b},\quad u_n\sim c_2 n^{-1/\a}
$$
with
$$
\b=(p-(E-D)/p)^{-1},\quad \a=p/(E-D),
$$
and $c_1,c_2$ are given by \eqref{eq:P},\eqref{eq:Q}. The harmonic mean of $\a$ and $\b$ is $\gamma=2/p$.

For $\eps>0$ we define $\a_\eps=\a+\eps, \b_\eps=(1+\eps/\a)\b$ and see that
$(u_n)\in\ell^{\a_\eps},(v_n)\in\ell^{\b_\eps}$. Note that $\b_\eps/\a_\eps=\b/\a$, so the condition (ii) of Theorem~\ref{thm:main1} is satisfied for $\a_\eps,\b_\eps$, while (iii) holds because of the asymptotic  behaviour of $(v_n)$.

 We then get that
$\rho\le \gamma_\eps$,  where $\gamma_\eps$ is the harmonic mean of $\a_\eps$ and $\b_\eps$. Letting $\eps\to 0$ we get $\rho\le \gamma=2/p$.
That $\rho\ge 2/p$ follows from Lemma~\ref{thm:orderbelow} because
the recurrence coefficients $(b_n)$ given by \eqref{eq:bn} have the asymptotic behaviour
$$
b_n\sim (p/2)^{p/2} n^{p/2}.
$$ 
$\quad\square$

\begin{rem}\label{thm:remx} {\rm The type part of Lemma~\ref{thm:orderbelow} can be applied to yield $\tau\ge (p/2)C^{-2/p}$ for any $C>(p/2)^{p/2}$, hence $\tau\ge 1$. This estimate is however weaker than the lower estimate of Theorem~\ref{thm:valenttype}.
}
\end{rem}

The special case $p=3$ and $(e_j)=(1,2,2),(d_j)=(0,0,1)$ of the rates \eqref{eq:birth}, \eqref{eq:death} was considered in \cite{Va98} and with more details in \cite{G:L:V}.
In this case
$$
\lambda_n<\sqrt{\mu_n\mu_{n+1}},n\ge 2,\quad \mu_{n+1}>\sqrt{\lambda_n\lambda_{n+1}},n\ge 1
$$
so $(b_n)$ in \eqref{eq:bn} is alternately log-convex and log-concave.

There is a special case where the  condition of log-concavity is true, and in this case the Valent conjecture concerning order follows from the results in \cite{B:S}.

\begin{prop}\label{thm:reg} Consider the special case of the rates \eqref{eq:birth}, \eqref{eq:death}, where $d_j=e_j-p/2,\; j=1,\ldots,p$, $p\ge 3$. Then the sequence $(b_n)$ in
\eqref{eq:bn} is eventually log-concave.
\end{prop}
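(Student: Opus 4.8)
The plan is to show that $\log b_n$ is eventually concave along the two interleaved subsequences and that this local concavity is compatible across the even/odd split, i.e. that the full sequence $(b_n)$ is eventually log-concave. Recall from \eqref{eq:bn} that
$$
b_{2n}^2=\prod_{j=1}^p(pn+e_j),\qquad b_{2n+1}^2=\prod_{j=1}^p(p(n+1)+d_j),
$$
so with the hypothesis $d_j=e_j-p/2$ we get $b_{2n+1}^2=\prod_{j=1}^p(pn+p/2+e_j)$. Thus, setting $f(x)=\tfrac12\sum_{j=1}^p\log(px+e_j)$ for $x>-e_1/p$, we have $\log b_{2n}=f(n)$ and $\log b_{2n+1}=f(n+\tfrac12)$. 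In other words, $\log b_m=f(m/2)$ for all $m\ge 0$, where $f$ is a smooth real function of a real variable.

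The key step is then to reduce eventual log-concavity of the integer sequence $(b_n)$ to concavity of the function $f$ on a half-line. Log-concavity of $(b_n)$ means $b_{m-1}b_{m+1}\le b_m^2$, i.e. $f(\tfrac{m-1}{2})+f(\tfrac{m+1}{2})\le 2f(\tfrac m2)$ for all large $m$; this is exactly midpoint concavity of $f$ at the point $m/2$ with step $1/2$, and it follows from $f''\le 0$ on the relevant half-line. So I would compute
$$
f'(x)=\frac{p}{2}\sum_{j=1}^p\frac{1}{px+e_j},\qquad
f''(x)=-\frac{p^2}{2}\sum_{j=1}^p\frac{1}{(px+e_j)^2}<0
$$
for every $x>-e_1/p$. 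Since each term $px+e_j>0$ once $x\ge 0$ (indeed for $n\ge 0$ by \eqref{eq:natcond}), $f$ is strictly concave on $[0,\infty)$; by the mean value form of the midpoint inequality (or Taylor with integral remainder), $f(\tfrac{m-1}{2})+f(\tfrac{m+1}{2})-2f(\tfrac m2)=\int f''\le 0$ whenever the interval $[\tfrac{m-1}{2},\tfrac{m+1}{2}]$ lies in the domain of concavity, which holds for all $m\ge 1$. Hence $b_{m-1}b_{m+1}\le b_m^2$ for all $m\ge 1$, and in fact strictly, so $(b_n)$ is (eventually, indeed always) log-concave.

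I do not expect a serious obstacle here; the only point requiring a little care is making sure the inequality $f''<0$ is used on an interval entirely contained in the domain $x>-e_1/p$, which forces the statement to be "eventually" rather than "for all $n$" if one is cautious about small indices, but since $e_1>0$ the interval $[\tfrac{m-1}{2},\tfrac{m+1}{2}]$ is contained in $[0,\infty)$ already for $m\ge 1$. If one prefers to avoid calculus, the same conclusion follows by writing $b_m^2/(b_{m-1}b_{m+1})$ as a product over $j$ of ratios $(pt+e_j)^2/\big((p(t-\tfrac12)+e_j)(p(t+\tfrac12)+e_j)\big)=1+\tfrac{(p/2)^2}{(p(t-1/2)+e_j)(p(t+1/2)+e_j)}>1$ with $t=m/2$, which makes the log-concavity manifest termwise and completely elementary. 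Either route finishes the proof.
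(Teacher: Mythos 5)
Your proof is correct and follows essentially the same route as the paper: under $d_j=e_j-p/2$ both $\lambda_n$ and $\mu_n$ are values of the single function $\prod_{j=1}^p(px+e_j)$ at the half-integer points $x=m/2$, and log-concavity of this function on $x>-e_1/p$ gives log-concavity of $(b_n)$, exactly as in the paper's argument. (Only a cosmetic slip: in your elementary variant the displayed product of ratios equals $b_m^4/(b_{m-1}^2b_{m+1}^2)$ rather than $b_m^2/(b_{m-1}b_{m+1})$, which does not affect the conclusion.)
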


\begin{proof} Note that $E-D=p^2/2$ and therefore \eqref{eq:valindet} holds.
The function
$$
f(x)=\prod_{j=1}^p(px+e_j)
$$
satisfies $f(n)=\lambda_n$ and $f(n-1/2)=\mu_n$, and since $f(x)$ is log-concave for $x>-e_1/p$, it follows that $(b_n)$ is log-concave. Using that
$$
b_n\sim (p/2)^{p/2}n^{p/2},
$$
we see that the exponent of convergence of $(b_n)$ is $2/p$, so the symmetric Hamburger moment problem has order $2/p$ by Theorem~\ref{thm:beralpha1}. 
\end{proof}

The special case $p=4$ and $(e_j)=(1,2,2,3),(d_j)=(-1,0,0,1)$ satisfies the conditions of Proposition~\ref{thm:reg} and was studied in \cite{B:V}.

\medskip

{\it Proof of Theorem~\ref{thm:main2}.}

We apply Lemma~\ref{thm:type1} and Lemma~\ref{thm:more1} to the symmetric moment problem 
from Proposition~\ref{thm:sumup}.
In this case
$$
\b=(p-(E-D)/p)^{-1},\quad \a=p/(E-D),\quad c_1c_2=p^{-p},\quad \g=2/p.
$$ 
By Theorem~\ref{thm:valent-order} we see that the common order of the functions \eqref{eq:type1} is $\rho=2/p.$

Since $T_1(z)=H((c_1c_2)^{1/2}z)$  the type $\tau$ of the functions \eqref{eq:type1} is 
$$
\tau=\tau_{T_1}=(c_1c_2)^{\rho/2}\tau_H=\tau_H/p.
$$
Finally, by Lemma~\ref{thm:more1} we have that $\tau=\tau_H/p=\tau_G/p$.

The type of the Stieltjes problem with rates \eqref{eq:birth} and \eqref{eq:death} is  by Proposition~\ref{thm:order/type}   equal to $\tau_H/p=\tau_G/p$. Finally note that $G_p$ given by \eqref{eq:Gp} and $G(z)=\sum_{n=1}\gamma_nz^{2n}$ from Lemma~\ref{thm:more1} have orders and types
$\rho_G=2\rho_{G_p}$ and $\tau_G=\tau_{G_p}$.
$\quad\square$

\medskip
{\it Proof of Theorem~\ref{thm:valenttype}.}  In the definition \eqref{eq:mz} of $\g_n(p)$ we choose $k_1=k_2=l_1,k_3=k_4=l_2,\ldots,k_{2n-1}=k_{2n}=l_n$ and get
$$
\g_n(p)>\sum_{1\le l_1<l_2<\ldots<l_n}(l_1l_2\ldots l_n)^{-p},
$$
showing that for $r>0$
\begin{equation}\label{eq:tv}
\prod_{n=1}^\infty \left(1+\frac{r^2}{n^p}\right)<1+\sum_{n=1}^\infty \g_n(p)r^{2n}<
\prod_{n=1}^\infty \left(1+\frac{r}{n^{p/2}}\right)^2.
\end{equation}
The three expressions above are the maximum moduli for three entire functions of the same order $2/p$, since it is known 
from \cite{BOAS} that the canonical product
$$
c(z)=\prod_{n=1}^\infty \left(1+\frac{z}{n^{1/\rho}}\right),\quad 0<\rho<1,
$$
has order $\rho_c=\rho$ and type $\tau_c=\pi/\sin(\pi\rho)$. From \eqref{eq:tv} we then get the following inequalities between the types of the three functions
$$
\frac{\pi}{\sin(\pi/p)}\le \tau_p \le \frac{2\pi}{\sin(2\pi/p)}=\frac{\pi}{\sin(\pi/p)\cos(\pi/p)},
$$
and dividing with $p$ yields the inequality \eqref{eq:type}. $\quad\square$

\medskip

Valent conjectured that the type is
$$
T=\int_0^1\frac{du}{(1-u^p)^{2/p}}
=\frac{1}{p}B(1/p,1-2/p)
$$
We have
\begin{equation}\label{eq:1}
T=\frac{(1/p)\Gamma(1/p)\Gamma(1-2/p)}{\Gamma(1-1/p)}=\frac{\pi/p}{\sin(\pi/p)}
\frac{\Gamma(1-2/p)}{\Gamma^2(1-1/p)},
\end{equation}
where we have used Euler's reflection formula for $\Gamma$.

We claim that $T$ lies between the bounds for the type $\tau$ given in \eqref{eq:type}.

The upper bound comes from
$$
T<\int_0^1\frac{du}{(1-u^{p/2})^{2/p}}=\frac{2\pi}{p\sin(2\pi/p)},
$$
where the integral is evaluated by transforming it to a Beta-integral and using the reflection formula.

Using \eqref{eq:1} the lower bound is equivalent to
$$
\Gamma^2(1-1/p)\le \Gamma(1-2/p),\quad p=3,4,\ldots,
$$
which follows from the log-convexity of $\Gamma$:
$$
\Gamma(\alpha x+(1-\alpha)y)\le \Gamma^{\alpha}(x)\Gamma^{1-\alpha}(y)
$$
for $\alpha=1/2$, $x=1,y=1-2/p$.

\medskip
{\it Proof of Theorem~\ref{thm:main3}.}  We have
\begin{eqnarray*}
\gamma_n(p)&\le& \sum_{1\le k_1<k_3<\ldots<k_{2n-3}} (k_1k_3\ldots k_{2n-3})^{-p}
(k_3-k_1)\ldots(k_{2n-3}-k_{2n-5})\\
&\times& \sum_{k_{2n-3}<k_{2n-1}\le k_{2n}}(k_{2n-1}k_{2n})^{-p/2},
\end{eqnarray*}
where we have used that 
$$
(k_{2j-1}k_{2j})^{-p/2}\le k_{2j-1}^{-p}
$$ 
for the $k_{2j+1}-k_{2j-1}$ values of $k_{2j}\in [k_{2j-1},k_{2j+1}[$, $j=1,\ldots,n-2$.

We next use
$$
\sum_{k=a}^\infty k^{-p/2}\le \int_{a-1}^\infty x^{-p/2}\,dx=\frac{(a-1)^{-p/2+1}}{p/2-1}
\le \frac{2^{p/2-1}}{p/2-1}a^{-p/2+1},
$$
where  $a\ge 2$, hence $a-1\ge a/2$, 
so for  $a=k_{2n-1}$ we get
$$
\gamma_n(p)\le \frac{2^{p/2-1}}{p/2-1}\sum_{1\le k_1<k_3<\ldots<k_{2n-1}} (k_1k_3\ldots k_{2n-1})^{-p}
(k_3-k_1)\ldots(k_{2n-3}-k_{2n-5})k_{2n-1}
$$
and by changing symbols
\begin{eqnarray*}
\begin{split}
\gamma_n(p) &\le \frac{2^{p/2-1}}{p/2-1}\sum_{1\le k_1<k_2<\ldots<k_{n}} (k_1k_2\ldots k_{n})^{-p}
(k_2-k_1)\ldots(k_{n-1}-k_{n-2})k_{n}\\
&\le  \zeta(p-1)\frac{2^{p/2-1}}{p/2-1}\sum_{1\le k_1<k_2<\ldots<k_{n-1}} (k_1k_2\ldots k_{n-1})^{-p}
(k_2-k_1)\ldots(k_{n-1}-k_{n-2}).
\end{split}
\end{eqnarray*}
This shows the right-hand side of \eqref{eq:mz3}.

On the other hand we have
$$
\gamma_n(p)\ge \sum_{1\le k_2<k_4<\ldots<k_{2n}}(k_2k_4\ldots k_{2n})^{-p}k_2(k_4-k_2)
\ldots(k_{2n}-k_{2n-2}),
$$
where we have used that 
$$
(k_{2j-1}k_{2j})^{-p/2}\ge k_{2j}^{-p}
$$ 
for the $k_{2j}-k_{2j-2}$ values of $k_{2j-1}\in ]k_{2j-2},k_{2j}]$, $j=2,\ldots,n$, respectively
$k_2$ values of $k_1$ when $j=1$.

By changing symbols we can rewrite the last inequality as
$$
\gamma_n(p)\ge \sum_{1\le k_1<k_2<\ldots<k_{n}}(k_1k_2\ldots k_{n})^{-p}k_1(k_2-k_1)
\ldots(k_{n}-k_{n-1}),
$$
which shows the left-hand side of \eqref{eq:mz3}. $\quad\square$

\medskip
{\it Proof of Theorem~\ref{thm:last}} From the proof of Proposition~\ref{thm:finch}
we see that changing $(b_n)$ from $n^c$ to $(n+1)^c$ will not change the order and type.
 For $b_n=(n+1)^c$ we find from \eqref{eq:P2n}
$$
P_{2n}^2(0)=\left(\frac{(2n)!}{2^{2n}(n!)^2}\right)^{2c},\quad 
Q_{2n+1}^2(0)=\left(\frac{2^{2n}(n!)^2}{(2n+1)!}\right)^{2c}.
$$
Using Stirling's formula we then get
$$
v_n=\pi^{-c}n^{-c}(1+O(1/n)),\quad u_n=(\pi/4)^cn^{-c}(1+O(1/n)),
$$
so we see that $v_n,u_n$ satisfy the conditions of \eqref{eq:add} with
$\a=\b=\g=1/c$ and $c_1c_2=2^{-2c}$.

From Lemma~\ref{thm:type1} we see that the type  of the problem is equal to the type of
the function
$$
T_1(z)=\sum_{n=1}^\infty s_n(1)(2^{-c}z)^{2n},
$$
where $s_n(1)=\g_n(2c)$. Since $T_1(z)$ has order $1/c$, we see that the type of the function $\sum \g_n(2c)z^{2n}$ is the double of the type of $T_1(z)$, and the result follows. $\quad\square$

\medskip
{\bf Acknowledgment} The authors want to thank the referee for a very thorough report which has lead to important improvements of the presentation.

Christian Berg; email:berg@math.ku.dk\\
Department of Mathematical Sciences, University of Copenhagen,\\
Universitetsparken 5, DK-2100, Denmark

\medskip
Ryszard Szwarc; email szwarc2@gmail.com\\
Institute of Mathematics,
University of Wroc{\l}aw, pl.\ Grunwaldzki 2/4, 50-384 Wroc{\l}aw, Poland 
 
\end{document}